 \def\cocoa{{\hbox{\rm C\kern-.13em o\kern-.07em C\kern-.13em o\kern-.15em A}}}
\newtheorem{theorem}{Theorem}[section]
\newtheorem{lemma}[theorem]{Lemma}
\newtheorem{proposition}[theorem]{Proposition}
\newtheorem{corollary}[theorem]{Corollary}
\theoremstyle{definition}
\newtheorem{remark}[theorem]{Remark}
\newtheorem{definition}[theorem]{Definition}
\newtheorem{example}[theorem]{Example}
\newcommand {\Hom}{\mathcal{H}\kern -0.25ex{\mathit om}}
\newcommand {\Spl}{\mathcal{S}\kern -0.25ex{\mathit pl}}
\newcommand {\Ext}{\mathcal{E}\kern -0.25ex{\mathit xt}}
\newcommand {\rk}{\mathrm{rk}}
\newcommand {\Hilb}{\mathcal{H}\kern -0.25ex{\mathit ilb\/}}
\newcommand {\bZ}{\mathbb{Z}}
\newcommand {\bP}{\mathbb{P}}
\newcommand{\cE}{{\mathcal E}}
\newcommand{\cF}{{\mathcal F}}
\newcommand{\cO}{{\mathcal O}}
\newcommand{\cG}{{\mathcal G}}
\newcommand{\cI}{{\mathcal I}}
\newcommand{\Pic}{\operatorname{Pic}}
\def\p#1{{\bP^{#1}}}
\def\mapright#1{\mathbin{\smash{\mathop{\longrightarrow}
\limits^{#1}}}}
\title[Rank two aCM bundles on general determinantal quartic surfaces in $\p3$]{Rank two aCM bundles \\ on general determinantal quartic surfaces in $\p3$}
\subjclass[2010]{Primary 14J60; Secondary 14J45}
\keywords{Vector bundle, Cohomology}
\author[Gianfranco Casnati]{Gianfranco Casnati}
\thanks{The author is member of the GNSAGA group of INdAM}
\begin{document}

\dedicatory{This paper is dedicated to the memory of A.T. Lascu.}

\begin{abstract}
Let $F\subseteq\p 3$ be a smooth determinantal quartic surface which is general in the N\"other--Lefschetz sense. In the present paper we give a complete classification of locally free sheaves $\cE$ of rank $2$ on $F$
such that $h^1\big(F,\cE(th)\big)=0$ for $t\in\bZ$.

\end{abstract}

\maketitle

\section{Introduction and Notation}
Throughout the whole paper we will work over an algebraically closed field $k$ of characteristic $0$ and we will denote by $\p 3$ the projective space of dimension $3$ over $k$. 

If $F\subseteq\p 3$ is a smooth surface we set $\cO_F(h):=\cO_{\p 3}(1)\otimes\cO_F$. A vector bundle $\cE$ on $F$ is called {\sl aCM} (i.e. arithmetically Cohen--Macaulay) if $h^1\big(F,\cE(th)\big)=0$ for $t\in\bZ$. 

When $\deg(F)\le2$, the well--known theorems of Horrocks and Kn\"orrer (see \cite{O--S--S}, \cite{A--O} and the references therein) provide a complete classification of aCM bundles on $F$. As $\deg(F)$ increases a complete picture is missing. It is thus interesting to better understand aCM bundles on surfaces with $\deg(F)\ge3$. 

In order to simplify our study we restrict our attention to bundles satisfying some further technical non--restrictive conditions. First, notice that the property of being aCM is invariant up to tensorizing by $\cO_F(th)$, thus we can consider in our study only {\sl initialized} bundles, i.e. bundles $\cE$ such that $h^0\big(F,\cE(-h)\big)=0$ and $h^0\big(F,\cE\big)\ne0$. Moreover, we can obviously focus onto {\sl indecomposable} bundles, i.e. bundles which do not split in a direct sum of bundles of lower rank.

When $\deg(F)=3$ some scattered results are known. E.g., M. Casanellas and R. Hartshorne proved that each smooth cubic surface supports families of arbitrary dimension of non--isomorphic, indecomposable initialized aCM bundles (see \cite{C--H1} and \cite{C--H2}). Moreover, D. Faenzi classified completely aCM bundles of rank $1$ and $2$ on such an $F$ in \cite{Fa}. A key point of all the quoted papers is the complete knowledge of $\Pic(F)$.

The case $\deg(F)\ge4$ is wide open. In this case the structure of $\Pic(F)$ is not uniquely determined, thus it could be very difficult to classify initialized indecomposable aCM bundles $\cE$ also in low ranks, unless assuming true some extra hypothesis on $F$. E.g. if $\Pic(F)$ is generated by $\cO_F(h)$ and $\cE$ has rank $2$ on $F$ with $c_1(\cE)\cong \cO_F(ch)$, then $3-\deg(F)\le c\le \deg(F)-1$, as essentially proved by C. Madonna (see \cite{Ma1}: see also \cite{Cs} where some slight improvement is explained). 
 
We are mostly interested in the case $\deg(F)=4$. In this case $F$ is a $K3$ surface, thus a lage amount of informations is available on $\Pic(F)$. E.g. K. Watanabe in \cite{Wa} classified initialized aCM bundles of rank $1$ on such an $F$. 

In the beautiful paper \cite{C--K--M1} E. Coskun, R.S. Kulkarni, Y. Mustopa dealt with the case of rank $2$ initialized indecomposable bundles $\cE$ on $F$ with $c_1=3h$. In \cite{C--N} the authors gave a complete description of bundles with $c_1=2h$: it is perhaps interesting to notice that in this case there is a large family of quartic surfaces, the {\sl determinantal} ones (i.e. surfaces whose equation can be expressed as the determinant of a $4\times4$ matrix with linear entries), supporting bundles whose non--zero sections do not vanish in the expected dimension.

The present paper is motivated by our interest in the classification of rank $2$ initialized indecomposable bundles $\cE$ on a determinantal quartic surface $F\subseteq\p3$. In this case $F$ contains a projectively normal smooth sextic curve $A$ of genus $3$. It is thus obvious that $\Pic(F)$ must contain the rank $2$ lattice $\Lambda$ generated by $h$ and $A$. 

Moreover, we actually have $\Pic(F) =\Lambda$ if $F$ is general in the sense of N\"other--Lefschetz, i.e. it represents a point of $\vert\cO_{\p3}(4)\vert$ outside a countable union of suitable proper subvarieties, due to some classic results proved in \cite{Lop}. 

Each $F$ satisfying the above property will be called a {\sl general determinantal quartic surface}. Theorems \ref{tc_1NonEffective} and \ref{tc_1Effective} give the complete classification of the bundles we are interested in on such an $F$. For reader's benefit we summarize such a classification in the following simplified statement. For the complete statement see Theorems \ref{tc_1NonEffective} and \ref{tc_1Effective}.

\medbreak
\noindent
{\bf Main Theorem.}
{\it Let $\cE$ be an initialized indecomposable aCM bundle of rank $2$ on a general determinantal quartic surface $F\subseteq\p3$.

Then one of the following asseritions holds.
\begin{enumerate}[label=(\roman{*})]
\item $c_1\in\{\ -h,A-h,2h-A\ \}$: then $c_2=1$. The zero locus $E$ of a general $s\in H^0\big(F,\cE)$ is a point.
\item $c_1=0$: then $c_2=2$. The zero locus $E\subseteq\p3$ of a general $s\in H^0\big(F,\cE)$ is a $0$--dimensional subscheme contained in exactly one line.
\item $c_1=h$: then $c_2$ is $3$, $4$, or $5$. The zero locus $E$ of a general $s\in H^0\big(F,\cE)$ is a $0$--dimensional subscheme of degree $c_2$ of a linear subspace of $\p3$ of dimension $c_2-2$.
\item $c_1\in\{\ A,3h-A\ \}$: then $c_2$  is $3$, or $4$. In this case the zero locus $E$ of a general $s\in H^0\big(F,\cE)$ is a $0$--dimensional subscheme.
\item $c_1\in\{\ A+h,4A-h\ \}$: then $c_2=8$. In this case the zero locus $E$ of a general $s\in H^0\big(F,\cE)$ is a $0$--dimensional subscheme which is contained in exactly two linearly independent quadrics.
\item $c_1=2h$: then $c_2=8$. In this case one of the two following cases occur:
\begin{enumerate}[label=(\alph{*})]
\item the zero locus $E$ of a general $s\in H^0\big(F,\cE)$ is a $0$--dimensional subscheme which is the base locus of a net of quadrics;
\item the zero locus $E$ of a general $s\in H^0\big(F,\cE)$ is a $0$--dimensional subscheme lying on exactly a twisted cubic;
\item the zero locus $E$ of each non--zero $s\in H^0\big(F,\cE)$ is a divisor in either $\vert A\vert$, or $\vert 3h-A\vert$.
\end{enumerate}
\item $c_1=3h$: then $c_2=14$. In this case the zero locus $E$ of a general $s\in H^0\big(F,\cE)$ is a $0$--dimensional subscheme.
\end{enumerate}
Moreover, all the above cases actually occur on $F$.
}
\medbreak

We now explain briefly the structure of the paper. In Section \ref{sQuartic} we recall all the facts on a smooth quartic surface that we will need in the paper. In Section \ref{sDeterminantal} we specialize to the case of determinantal quartic surfaces. In Section \ref{sExample} we list a series of examples. In Section \ref{sZero} we deal with the zero--locus of the sections of initialized indecomposable aCM bundles of rank $2$ on $F$, proving that it is $0$--dimensional, but the two cases which we mentioned before. Finally, in Section \ref{sMain} we prove the above Main Theorem, showing that the list of examples in Section \ref{sExample}.

The author would like to thank F. Galluzzi for several helpful discussions and the referee, whose comments helped to considerably improve the exposition of the paper.

For all the notations and unproven results we refer to \cite{Ha2}.

\section{Preliminary results}
\label{sQuartic}
In this section we will recall some preliminary facts about vector bundles on a smooth quartic surface $F\subseteq\p3$, most of them collected from several papers (see \cite{C--K--M2}, \cite{SD}, \cite{Wa} and \cite{Hu}). 

By adjunction in $\p3$ we have $\omega_F\cong\cO_F$, i.e. each such an $F$ is a $K3$ surface, thus we can make use of all the results proved in \cite{SD}. The first important fact is that Serre duality for each locally free sheaf $\cF$ on $F$ becomes
$$
h^i\big(F,\cF\big)=h^{2-i}\big(F,\cF^\vee\big),\qquad i=0,1,2.
$$
Moreover Riemann--Roch theorem on $F$ is
\begin{equation}
\label{RRGeneral}
h^0\big(F,\cF\big)+h^{2}\big(F,\cF\big)=h^{1}\big(F,\cF\big)+2\rk(\cF)+\frac{c_1(\cF)^2}2-c_2(\cF).
\end{equation}
In particular, if $D$ is a divisor with $D^2\ge-2$, then either $D$ or $-D$ is necessarily effective.

If $D$ is an effective divisor on $F$, then $h^2\big(F,\cO_F(D)\big)=h^0\big(F,\cO_F(-D)\big)=0$. Moreover
$$
h^1\big(F,\cO_F(D)\big)=h^1\big(F,\cO_F(-D)\big)=h^0\big(D,\cO_D\big)-1
$$
(see \cite{SD}, Lemma 2.2). It follows that for each irreducible divisor $D$ on $F$, the dimension of $\vert D\vert$ and the arithmetic genus of $D$ satisfy
\begin{equation}
\label{h^1}
h^0\big(F,\cO_F(D)\big)=2+\frac {D^2}2,\qquad \deg(D)=Dh,\qquad p_a(D)=1+\frac {D^2}2,
\end{equation}
(see \cite{SD}, Paragraph 2.4). In particular, the unique irreducible and reduced fixed divisors $D$ on $F$ satisfy $D^2=-2$, thus they are smooth rational curves.

We summarize the other helpful results we will need in the following statement.

\begin{proposition}
\label{SD}
Let $F\subseteq\p3$ be a smooth quartic surface.

For each effective divisor $D$ on $F$ such that $\vert D\vert$ has no fixed components the following assertions hold. 
\begin{enumerate}[label=(\roman{*})]
\item $D^2\ge0$ and $\cO_F(D)$ is globally generated.
\item If $D^2>0$, then the general element of $\vert D\vert$ is irreducible and smooth: in this case $h^1\big(F,\cO_F(D)\big)=0$.
\item If $D^2=0$, then there is an irreducible curve $\overline{D}$ with $p_a(\overline{D})=1$ such that $\cO_F(D)\cong\cO_F(e\overline{D})$ where $e-1:=h^1\big(F,\cO_F(D)\big)$: in this case the general element of $\vert D\vert$ is smooth.
\end{enumerate}
\end{proposition}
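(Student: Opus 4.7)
The plan is to derive all three assertions from the Riemann--Roch formula (\ref{RRGeneral}), Serre duality on the K3 surface $F$, Kawamata--Viehweg vanishing, and the classification of effective square-zero divisors on K3 surfaces developed in \cite{SD}. For part (i), I would first note that ``no fixed components'' forces $\dim|D|\ge 1$: otherwise the unique member $D$ would itself be a fixed component. Writing $D=\sum m_iC_i$ with the $C_i$ distinct and irreducible, each subset $\{D'\in|D|:C_i\subseteq D'\}$ is proper and closed in $|D|$, so a general $D'\in|D|$ contains none of the $C_i$; hence $D^2=D\cdot D'=\sum m_i(C_i\cdot D')\ge 0$. For the base-point freeness I would then invoke the K3 analysis of \cite{SD}: an effective divisor on $F$ whose linear system has no fixed components and whose self-intersection is nonnegative is automatically globally generated.

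For part (ii), since $D^2>0$ and $\cO_F(D)$ is globally generated by (i), the divisor $D$ is nef and big, so Kawamata--Viehweg vanishing combined with $\omega_F\cong\cO_F$ yields $h^1\big(F,\cO_F(D)\big)=0$. Bertini in characteristic $0$ then guarantees a smooth general member $D''\in|D|$. To upgrade smooth to irreducible I would use the structure sequence
\[
0\longrightarrow \cO_F(-D'')\longrightarrow \cO_F\longrightarrow \cO_{D''}\longrightarrow 0
\]
together with the identity $h^1\big(F,\cO_F(-D'')\big)=h^1\big(F,\cO_F(D'')\big)=0$ (Serre duality plus the vanishing just proved) to deduce $h^0\big(D'',\cO_{D''}\big)=1$; a smooth connected curve is automatically irreducible.

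For part (iii), when $D^2=0$ the morphism $\varphi\colon F\to\bP^N$ defined by $|D|$ has one-dimensional image. Passing to its Stein factorization $\varphi\colon F\to C\to\bP^N$ and letting $\overline D$ be a general (smooth) fiber of $F\to C$, we have $\overline D^2=0$, so (\ref{h^1}) gives $p_a(\overline D)=1$. The identification $\cO_F(D)\cong\cO_F(e\overline D)$ for a suitable positive integer $e$ follows from the classification of square-zero mobile classes in \cite{SD}, whence a general member of $|e\overline D|$ is the disjoint union of $e$ smooth fibers and hence smooth. Finally, $|\overline D|$ being a base-point-free pencil gives $h^0\big(F,\cO_F(e\overline D)\big)=e+1$, so (\ref{RRGeneral}) combined with $h^2\big(F,\cO_F(D)\big)=h^0\big(F,\cO_F(-D)\big)=0$ produces $h^1\big(F,\cO_F(D)\big)=e-1$. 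The hardest step is the structural identification $\cO_F(D)\cong\cO_F(e\overline D)$: everything else assembles routinely from Bertini, Kawamata--Viehweg, and Riemann--Roch, but the square-zero classification genuinely relies on the K3-specific arguments of \cite{SD}.
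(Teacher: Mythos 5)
Your proposal is correct. The paper itself offers no argument at all here: it simply cites Saint--Donat (Proposition 2.6 and Corollary 3.2 of \cite{SD}), so any honest reconstruction was bound to look different in form. What you do is fill in the routine parts --- the nonnegativity of $D^2$ via a general member avoiding each $C_i$, the vanishing $h^1\big(F,\cO_F(D)\big)=0$ for $D$ nef and big via Kawamata--Viehweg together with $\omega_F\cong\cO_F$, connectedness of a smooth member from the structure sequence and Serre duality, and the count $h^1=e-1$ from $h^0\big(F,\cO_F(e\overline D)\big)=e+1$ and Riemann--Roch --- while correctly isolating the two genuinely K3--specific inputs (base--point freeness in (i) and the identification $\cO_F(D)\cong\cO_F(e\overline D)$ in (iii)) and deferring those to \cite{SD}, exactly as the paper does globally. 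One small remark: your Kawamata--Viehweg route to $h^1=0$ in (ii) is heavier machinery than Saint--Donat's original elementary argument, but it is valid in characteristic $0$, which is the standing hypothesis of the paper; and in (i) you should note the trivial case $D=0$ separately, since your ``otherwise $D$ is a fixed component'' remark presumes $D\ne0$. Neither point affects correctness.
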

\begin{proof}
See \cite{SD}, Proposition 2.6 and Corollary 3.2.
\end{proof}

Now we turn our attention to aCM line bundles on $F$ recalling a very helpful result from \cite{Wa}. A first obvious fact is that for each  divisor $D$ on $F$, then $\cO_F(D)$ is aCM if and only if the same is true for $\cO_F(-D)$.

The main result from \cite{Wa} is the following statement.

\begin{proposition}
\label{pWa}
Let $F\subseteq\p3$ be a smooth quartic surface.

For each effective divisor $D$ on $F$ with $D\ne0$, then $\cO_F(D)$ is aCM if and only if one of the following cases occurs.
\begin{enumerate}[label=(\roman{*})]
\item $D^2=-2$ and $1\le hD\le3$.
\item $D^2=0$ and $3\le hD\le4$.
\item $D^2=2$ and $hD=5$.
\item $D^2=4$, $hD=6$ and $h^0\big(F,\cO_F(D-h)\big)=h^0\big(F,\cO_F(2h-D)\big)=0$.
\end{enumerate}
\end{proposition}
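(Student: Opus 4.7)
The plan is to prove the equivalence in two directions, leveraging Riemann--Roch, Serre duality, and the Hodge index theorem on the $K3$ surface $F$.

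\textbf{Necessity.} Assume $\cO_F(D)$ is aCM. Since the aCM property is invariant under twisting by $\cO_F(h)$, I may twist so that $D$ is initialized, i.e.\ $h^0(F,\cO_F(D))>0$ and $h^0(F,\cO_F(D-h))=0$. Since $\omega_F\cong\cO_F$, Serre duality gives $h^2(\cO_F(D+th))=h^0(\cO_F(-D-th))$, and Riemann--Roch combined with the vanishing $h^1(\cO_F(D+th))=0$ yields
\begin{equation*}
h^0\big(\cO_F(D+th)\big)+h^0\big(\cO_F(-D-th)\big)=2t^2+t(Dh)+2+\tfrac{D^2}{2}
\end{equation*}
for every $t\in\bZ$. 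Evaluating at $t=0$ (where $-D$ is ineffective since $D\ne 0$ is effective) gives $h^0(\cO_F(D))=2+D^2/2$, so in particular $D^2\ge -2$. At $t=-1$ and $t=-2$, using $h^0(\cO_F(D-h))=h^0(\cO_F(D-2h))=0$, one gets $h^0(\cO_F(h-D))=4-Dh+D^2/2\ge 0$ and $h^0(\cO_F(2h-D))=10-2(Dh)+D^2/2\ge 0$. Combining these with the Hodge index bound $(Dh)^2\ge 4D^2$ and positivity $Dh\ge 1$ cuts the admissible pairs $(D^2,Dh)$ down to a finite list which, after possibly twisting back by $h$, reproduces cases (i)--(iv).

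\textbf{Sufficiency.} For each of cases (i)--(iii), I check $h^1(\cO_F(D+th))=0$ for every $t$ directly. For $|t|$ large the vanishing is automatic from Riemann--Roch together with non-negativity of $h^0$ and $h^2$; the few critical middle twists are pinned down using Proposition~\ref{SD} and the formula $h^1(\cO_F(D))=h^0(\cO_D)-1$ for effective $D$, since the numerical data in cases (i)--(iii) forces $D$ to be connected. Case (iv) is handled the same way, except that the two extra hypotheses $h^0(\cO_F(D-h))=h^0(\cO_F(2h-D))=0$ are precisely the vanishing data needed at $t=-1$ and $t=-2$, where they are not forced by the numerical class alone.

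\textbf{Main obstacle.} The hardest part is case (iv): divisors with $(D^2,Dh)=(4,6)$ always exist on a general quartic (e.g.\ a projectively normal sextic of genus $3$), but $\cO_F(D)$ is not automatically aCM --- a decomposition like $D\sim h+C$ for a suitable $(-2)$-curve $C$ breaks the vanishing at $t=-1$. The extra hypotheses in (iv) are exactly what rules out such decompositions on either side; verifying that these two vanishing conditions suffice requires a careful sweep of the twists $t=-1,-2$ where Riemann--Roch is most constrained, and showing that the Hodge index inequality leaves no further numerical room for obstructions.
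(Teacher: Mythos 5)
First, note that the paper does not prove this proposition at all: it is quoted verbatim from Watanabe (\cite{Wa}, Theorem~1.1), so your attempt is being measured against that source rather than against an argument in the text. Your toolkit (Riemann--Roch, Serre duality, Hodge index, Saint--Donat) is the right one, but the necessity direction has a genuine gap: the constraints you actually write down --- $D^2\ge-2$, $Dh\ge1$, $h^0\big(\cO_F(h-D)\big)=4-Dh+\tfrac{D^2}2\ge0$, $h^0\big(\cO_F(2h-D)\big)=10-2Dh+\tfrac{D^2}2\ge0$, and $(Dh)^2\ge4D^2$ --- do \emph{not} cut the pairs $(D^2,Dh)$ down to a finite list. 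For instance $(D^2,Dh)=(100,25)$ satisfies every one of them, as do $(6,6)$ (the canonical genus-$4$ sextic), $(2,4)$, $(0,2)$, etc. The missing idea is to use your two computed values of $h^0$ as \emph{effectiveness} statements rather than mere non-negativity: if $h^0\big(\cO_F(h-D)\big)>0$ then $h-D$ is effective, so $(h-D)h\ge0$ forces $Dh\le4$ ($D$ is essentially a plane curve); otherwise $h^0\big(\cO_F(h-D)\big)=0$ gives the \emph{equality} $Dh=4+\tfrac{D^2}2$, and substituting this into $h^0\big(\cO_F(2h-D)\big)=2-\tfrac{D^2}2\ge0$ yields $D^2\le4$, which produces exactly the four pairs $(-2,3),(0,4),(2,5),(4,6)$. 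The residual range $Dh\le4$ then still requires the classification of low-degree curves in $\p3$ (and Saint--Donat's analysis of fixed components and connectedness) to obtain the lower bounds $hD\ge3$ in case (ii) and to exclude pairs such as $(2,3)$, $(2,4)$ and $(-2,4)$; none of this is present in your sketch.

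Two further points. In the sufficiency direction, the assertion that the vanishing of $h^1$ for $|t|$ large is ``automatic from Riemann--Roch together with non-negativity of $h^0$ and $h^2$'' is not correct as stated: Riemann--Roch alone never forces $h^1=0$. The clean route --- which also bypasses most of your case-by-case verification --- is the paper's own Lemma~\ref{laCM}: $\cO_F(D)$ is aCM if and only if the curve $D$ is aCM in $\p3$, reducing sufficiency to the known arithmetic Cohen--Macaulayness of lines, conics, plane and twisted cubics, elliptic quartics, genus-$2$ quintics and the projectively normal sextics of case (iv). Finally, your normalization to an initialized twist is the right instinct (indeed the statement is false without it: $D$ with $D^2=6$, $Dh=6$ is aCM but appears in no case, because $D-h$ is effective of type (i)), but then the invariants in (i)--(iv) are those of the initialized representative, so ``twisting back by $h$'' does not literally ``reproduce'' the listed cases; the initialization hypothesis, present in Watanabe's original statement, has to be carried explicitly through the conclusion.
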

\begin{proof}
See \cite{Wa}, Theorem 1.1.
\end{proof}

Notice that the general smooth quartic surface $F\subseteq\p3$ does not support initialized aCM line bundles besides $\cO_F$. Indeed, in this case, $\Pic(F)$ is generated by $\cO_F(h)$, due to N\"other--Lefschetz theorem. In particular, for each line bundle $\cO_F(D)$ on $F$ one has that both $Dh$ and $D^2$ are positive multiples of $4$.

Recall that a curve $D\subseteq\p3$ is called aCM if $h^1\big(\p3,\cI_{D\vert\p3}(th)\big)=0$ for each $t\in\bZ$ (see \cite{Mi}, Lemma 1.2.3). A curve $D\subseteq\p3$ is projectively normal if and only if it is aCM and smooth.
We recall the following helpful fact.

\begin{lemma}
\label{laCM}
Let $F\subseteq\p3$ be a smooth quartic surface.

For each effective divisor $D$ on $F$, then $\cO_F(D)$ is  aCM if and only if the curve $D$ is aCM in $\p3$.
\end{lemma}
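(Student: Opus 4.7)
The plan is to relate the two cohomological conditions by the standard short exact sequence connecting the ideal sheaves $\cI_{D|\p3}$ and $\cI_{D|F} \cong \cO_F(-D)$. Since $F$ is a quartic hypersurface, $\cI_{F|\p3} \cong \cO_{\p3}(-4)$, giving the exact sequence
$$
0 \longrightarrow \cO_{\p3}(-4) \longrightarrow \cI_{D|\p3} \longrightarrow \cO_F(-D) \longrightarrow 0.
$$
Twisting by $\cO_{\p3}(t)$ and taking cohomology, I would use that $h^1\bigl(\p3,\cO_{\p3}(t-4)\bigr)=h^2\bigl(\p3,\cO_{\p3}(t-4)\bigr)=0$ for every $t\in\bZ$, so the long exact sequence produces an isomorphism
$$
h^1\bigl(\p3,\cI_{D|\p3}(th)\bigr)\cong h^1\bigl(F,\cO_F(-D+th)\bigr)\qquad\text{for every }t\in\bZ.
$$

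The right-hand side vanishes for all $t$ precisely when $\cO_F(-D)$ is aCM. As recalled in the paragraph preceding Proposition \ref{pWa}, $\cO_F(-D)$ is aCM if and only if $\cO_F(D)$ is aCM (this follows from Serre duality on the $K3$ surface $F$, using $\omega_F\cong\cO_F$). Hence the vanishing on the right-hand side above is equivalent to $\cO_F(D)$ being aCM on $F$. The vanishing on the left-hand side is the definition of $D$ being aCM in $\p3$, so the two conditions are equivalent.

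There is no real obstacle here; the only point to check carefully is that $\cI_{D|F}\cong\cO_F(-D)$ (which is immediate since $D$ is an effective Cartier divisor on the smooth surface $F$) and that the connecting piece $\cO_{\p3}(t-4)$ contributes no cohomology in the degrees that matter, which is classical.
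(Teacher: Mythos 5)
Your proof is correct and follows essentially the same route as the paper: the same short exact sequence $0\to\cI_{F\vert\p3}\to\cI_{D\vert\p3}\to\cI_{D\vert F}\to0$, the identifications $\cI_{F\vert\p3}\cong\cO_{\p3}(-4)$ and $\cI_{D\vert F}\cong\cO_F(-D)$, and the vanishing of the intermediate cohomology of $\cO_{\p3}(t-4)$ to identify $h^1\bigl(\p3,\cI_{D\vert\p3}(t)\bigr)$ with $h^1\bigl(F,\cO_F(-D+th)\bigr)$. The paper phrases the last step via Serre duality as $h^1\bigl(F,\cO_F(D-th)\bigr)=h^1\bigl(F,\cI_{D\vert F}(th)\bigr)$, while you invoke the equivalent fact that $\cO_F(D)$ is aCM if and only if $\cO_F(-D)$ is; these are the same observation.
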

\begin{proof}
We have the exact sequence
\begin{equation*}
\label{seqManyIdeals}
0\longrightarrow \cI_{F\vert \p3}\longrightarrow \cI_{D\vert \p3}\longrightarrow \cI_{D\vert F}\longrightarrow 0.
\end{equation*}
Since $\cI_{F\vert \p3}\cong\cO_{\p3}(-4)$ and $\cI_{D\vert F}\cong\cO_F(-D)$, it follows that $h^1\big(\p3,\cI_{D\vert \p3}(t)\big)=0$ if and only if $h^1\big(F, \cO_F(D-th)\big)=h^1\big(F, \cI_{D\vert F}(th)\big)=0$.
\end{proof}

We conclude this section by recalling some further facts facts on vector bundles on a smooth quartic surface $F\subseteq\p3$. Let $\cF$ be a vector bundle on $F$: we can consider it as a sheaf over $\p3$, thus $H^0_*(\cF):=\bigoplus_{t\in\bZ} H^0\big(F,\cF(th)\big)$ is also a module over the polynomial ring $S:=k[x_0,x_1,x_2,x_3]$. For the following fact see \cite{C--H1}, Theorem 3.1.

\begin{lemma}
\label{lGenerator}
Let $\cF$ be an initialized aCM bundle of rank $r$ on a smooth quartic surface $F\subseteq\p3$.

The minimal number of generators of $H^0_*(\cF)$ as a module over $S$ is at most $4r$.
\end{lemma}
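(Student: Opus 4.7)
The plan is to study $M := H^0_*(\cF) = \bigoplus_{t \in \bZ} H^0\big(F,\cF(th)\big)$ as a finitely generated graded module over $S := k[x_0,x_1,x_2,x_3]$, exploiting the fact that the aCM hypothesis is precisely a Cohen--Macaulay condition on $M$.

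First, I would verify that $M$ is a maximal Cohen--Macaulay $S/(f)$-module of Krull dimension $3$, where $f \in S$ is the defining equation of $F$. Since the support of $M$ is the affine cone over $F$, one has $\dim_S M = 3$. On the other hand, $M$ is saturated, so $H^0_{\mathfrak m}(M) = H^1_{\mathfrak m}(M) = 0$, and the local cohomology formula $H^2_{\mathfrak m}(M)_t \cong H^1\big(F,\cF(th)\big)$ together with the aCM hypothesis yields $H^2_{\mathfrak m}(M) = 0$. Hence $\mathrm{depth}_{\mathfrak m}(M) = 3 = \dim_S M$.

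Next, I would pick three generic linear forms $\ell_1,\ell_2,\ell_3 \in S_1$: they vanish on a single point of $\p3$, which lies off $F$ for generic choice, so $(\ell_1,\ell_2,\ell_3)$ is a homogeneous system of parameters on $M$ and, by the CM property, also an $M$-regular sequence. Exactness of the associated Koszul complex on $M$ then gives
\[
H_{M/(\ell_1,\ell_2,\ell_3)M}(t) \;=\; (1-t)^3\, H_M(t) \;=:\; Q(t),
\]
a polynomial with non-negative integer coefficients, and consequently
\[
\dim_k M/(\ell_1,\ell_2,\ell_3)M \;=\; Q(1) \;=\; e(M),
\]
the Hilbert--Samuel multiplicity of $M$ with respect to ${\mathfrak m}$. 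A short Riemann--Roch calculation on the $K3$ surface $F$ (using $\chi(\cO_F)=2$ and $h^2=4$) yields $\chi\big(\cF(th)\big) = 2rt^2 + O(t)$, so the Hilbert polynomial of $M$ has leading coefficient $2r$ and $e(M) = 2!\cdot 2r = 4r$.

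Finally, since $(\ell_1,\ell_2,\ell_3) \subseteq {\mathfrak m}$, the natural surjection $M/(\ell_1,\ell_2,\ell_3)M \twoheadrightarrow M/{\mathfrak m}M$ gives
\[
\mu(M) \;=\; \dim_k M/{\mathfrak m}M \;\le\; \dim_k M/(\ell_1,\ell_2,\ell_3)M \;=\; 4r,
\]
which is the desired bound. The main technical point is the transfer of the aCM vanishing into a depth statement on $M$ via local cohomology; once this commutative-algebraic set-up is in place, the rest is a routine Koszul/Hilbert-series computation together with Riemann--Roch on a $K3$ surface.
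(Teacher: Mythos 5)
Your proof is correct, and it is essentially the argument behind the result the paper cites (Theorem 3.1 of \cite{C--H1}, adapted from degree $3$ to degree $4$): the aCM hypothesis makes $H^0_*(\cF)$ a maximal Cohen--Macaulay module, and cutting by a linear system of parameters bounds the number of generators by the multiplicity $\deg(F)\cdot r=4r$. All the steps — the local cohomology translation of the aCM condition into depth, the regularity of the generic linear s.o.p., and the Riemann--Roch computation of the leading coefficient $2r$ — check out.
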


\begin{definition}
Let $\cF$ be an initialized aCM bundle of rank $r$ on a smooth quartic surface $F\subseteq\p3$.

We say that $\cF$ is {\sl Ulrich} if it is initialized, aCM and $h^0\big(F,\cF\big)=4r$. 
\end{definition}

\begin{remark}
\label{rUlrich}
If $\cF$ is Ulrich, then it is globally generated by Lemma \ref{lGenerator}. More generally, Ulrich bundles on $F$ of rank $r$ are exactly the bundles $\cF$ on $F$ such that $H^0_*(\cF)$ has linear minimal free resolution over $S$ (see \cite{C--H1}, Proposition 3.7).

Sheafifying such a resolution we thus obtain an exact sequence
\begin{equation*}
\label{seqUlrich}
0\longrightarrow\cO_{\p3}(-1)^{\oplus 4r}\longrightarrow\cO_{\p3}^{\oplus 4r}\longrightarrow\cF\longrightarrow0.
\end{equation*}
\end{remark}

\section{The general smooth determinantal quartic surfaces}
\label{sDeterminantal}
In this section we will summarize some facts about the smooth determinantal quartic surfaces in $\p3$.
We will denote by $f$ the polynomial defining $F$.

\begin{proposition}
\label{pDeterminantal}
Let $F\subseteq\p3$ be a smooth quartic surface. 

The following assertions are equivalent.
\begin{enumerate}[label=(\roman{*})]
\item The polynomial $f$ defining $F$ is the determinant of a $4\times4$ matrix $\Phi$ with linear entries.
\item There exists a line bundle $\cO_F(A)$ such that the sheafified minimal free resolution of $H^0_*(\cO_F(A))$ has the form
\begin{equation}
\label{seqDeterminantal}
0\longrightarrow\cO_{\p3}(-1)^{\oplus4}\mapright{\varphi}\cO_{\p3}^{\oplus4}\longrightarrow\cO_F(A)\longrightarrow0.
\end{equation}
\item There exists an initialized aCM line bundle $\cO_F(A)$ with $A^2=4$.
\item There exists a smooth integral sextic curve $A\subseteq F$ of genus $3$ which is not hyperelliptic.
\item There exists a smooth integral sextic curve $A\subseteq F$ of genus $3$ which is projectively normal in $\p3$.
\item There exists a smooth integral sextic curve $A\subseteq F$ of genus $3$ such that
$$
h^0\big(F,\cO_F(2h-A)\big)=h^0\big(F,\cO_F(A-h)\big)=0.
$$
\end{enumerate}
\end{proposition}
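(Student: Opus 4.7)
The plan is to establish the equivalences along the chain $(i)\Leftrightarrow(ii)\Leftrightarrow(iii)\Leftrightarrow(vi)\Leftrightarrow(iv)\Leftrightarrow(v)$, using Propositions \ref{SD} and \ref{pWa}, Lemma \ref{laCM} and Remark \ref{rUlrich} as the main tools.

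The step $(i)\Leftrightarrow(ii)$ is the classical bijection between determinantal representations of a hypersurface and Ulrich line bundles on it: writing $\varphi$ in \eqref{seqDeterminantal} as a $4\times 4$ matrix $\Phi$ of linear forms, the fact that $\coker\varphi=\cO_F(A)$ is supported on $F$ together with a degree comparison forces $\det(\Phi)=\lambda f$ for some $\lambda\in k^*$; conversely, for any $\Phi$ with $\det(\Phi)=f$ the cokernel of the associated map is torsion-free of rank $1$ on $F$, hence locally free since $F$ is smooth. For $(ii)\Rightarrow(iii)$, the cohomology of \eqref{seqDeterminantal} twisted by $\cO_{\p3}(t)$ gives directly that $\cO_F(A)$ is aCM and initialized with $h^0\bigl(F,\cO_F(A)\bigr)=4$, and Riemann-Roch \eqref{RRGeneral} combined with Serre duality forces $A^2=4$. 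The converse $(iii)\Rightarrow(ii)$ follows from Remark \ref{rUlrich}: by Riemann-Roch, any initialized aCM line bundle with $A^2=4$ satisfies $h^0\bigl(F,\cO_F(A)\bigr)=2+A^2/2=4$, hence is Ulrich and admits the linear resolution \eqref{seqDeterminantal}.

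The equivalence $(iii)\Leftrightarrow(vi)$ rests on Proposition \ref{pWa}: from (iii), since $A^2=4$, only case (iv) of that proposition is compatible with initialized aCM, producing $hA=6$ and the two vanishings; moreover the Ulrich property yields global generation by Remark \ref{rUlrich}, so $|A|$ has no fixed components and Proposition \ref{SD}(ii) applied to $A^2=4>0$ produces a smooth integral curve $A$ of degree $6$ and genus $3$ via \eqref{h^1}, giving (vi). Conversely, a smooth integral sextic of genus $3$ has $A^2=2p_a(A)-2=4$, so the vanishings of (vi) combined with Proposition \ref{pWa}(iv) recover the aCM and initialized properties, which is (iii).

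For the equivalences $(vi)\Leftrightarrow(iv)\Leftrightarrow(v)$ one uses the classical fact that a smooth integral sextic of genus $3$ in $\p3$ is hyperelliptic if and only if it is contained in a quadric: hyperelliptic such curves are precisely the bidegree $(2,4)$ divisors on a smooth quadric via the $g^1_2$; dually, on $F$ the curve $A$ lies on a quadric of $\p3$ exactly when $A\le 2h$ as divisors, i.e.\ when $h^0\bigl(F,\cO_F(2h-A)\bigr)\ge 1$. Thus the vanishing in (vi) is equivalent to non-hyperellipticity of $A$, which in turn is equivalent to projective normality of $A$ in $\p3$: non-speciality of $|\cO_A(h)|$ gives $h^0\bigl(A,\cO_A(2h)\bigr)=10=h^0\bigl(\p3,\cO_{\p3}(2)\bigr)$, so $h^0\bigl(\cI_{A|\p3}(2)\bigr)=0$ and Castelnuovo-Mumford regularity forces $A$ to be aCM in $\p3$, hence projectively normal. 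The main obstacle will lie precisely in this geometric input: the quadric-containment characterization of hyperelliptic genus $3$ sextics and the Castelnuovo-Mumford step for projective normality. The remaining implications reduce to cohomological bookkeeping on $F$ and $\p3$ together with direct invocations of Propositions \ref{SD} and \ref{pWa}.
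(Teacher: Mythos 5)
Your proposal is correct in substance but takes a genuinely different route from the paper. The paper disposes of the equivalences (i)$\Leftrightarrow$(ii)$\Leftrightarrow$(iv)$\Leftrightarrow$(v) by citing Beauville (Corollaries 1.8, 6.6 and Proposition 6.2 of \cite{Bea}) and of (iii)$\Leftrightarrow$(vi) by citing Watanabe via Proposition \ref{pWa}; the only argument it writes out is the bridge (ii)$\Leftrightarrow$(iii), namely the Euler characteristic computation forcing $A^2=4$ and, conversely, the observation that an initialized aCM line bundle with $A^2=4$ has $h^0=4$, hence is Ulrich and admits the linear resolution --- which is exactly your middle step. You instead re-derive the Beauville package: a direct determinantal argument for (i)$\Leftrightarrow$(ii), and classical curve theory (hyperelliptic genus-$3$ sextics are exactly those lying on a quadric, plus Castelnuovo--Mumford regularity for projective normality) for (vi)$\Leftrightarrow$(iv)$\Leftrightarrow$(v). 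The ``classical fact'' you isolate as the main obstacle is precisely the content of Beauville's Proposition 6.2, so the two proofs ultimately rest on the same geometric input; what your organization buys is self-containedness and, usefully, an explicit treatment of the passage from the line bundle of (iii) to an actual \emph{smooth integral} curve in (vi) via global generation and Proposition \ref{SD}(ii), a point the paper's one-line citation glosses over. Two caveats. First, in (ii)$\Rightarrow$(i) the inference ``torsion-free of rank $1$ on $F$, hence locally free since $F$ is smooth'' is not valid as stated: ideal sheaves of points are torsion-free of rank $1$ on a smooth surface without being locally free. The correct reason is that $\coker\varphi$ has projective dimension $1$ over $\cO_{\p3}$, hence by Auslander--Buchsbaum it is a maximal Cohen--Macaulay module over each regular local ring $\cO_{F,x}$, and such modules are free. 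Second, in (iv)$\Rightarrow$(vi) you only account for the vanishing $h^0\big(F,\cO_F(2h-A)\big)=0$; the other vanishing $h^0\big(F,\cO_F(A-h)\big)=0$ deserves a word --- it holds for every smooth integral sextic of genus $3$ on $F$, since $\cO_A(h)$ is non-special of degree $6$, so $h^0\big(A,\cO_A(h)\big)=4$ and the restriction $H^0\big(F,\cO_F(h)\big)\to H^0\big(A,\cO_A(h)\big)$ is an isomorphism, whence $h^1\big(F,\cO_F(h-A)\big)=0$ and then $h^0\big(F,\cO_F(A-h)\big)=h^2\big(F,\cO_F(h-A)\big)=0$ because $\chi\big(\cO_F(h-A)\big)=0$.
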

\begin{proof}
Assertion (i) is equivalent to assertions (ii), (iv), (v) due to Corollaries 1.8, 6.6 and Proposition 6.2 of \cite{Bea}. Assertions (iii) and (vi) are equivalent thanks to Proposition \ref{pWa}.

Assertion (ii) implies (iii). Indeed, we have
$$
2+\frac{A^2}2=\chi(\cO_F(A))=4(\chi(\cO_{\p3})-\chi(\cO_{\p3}(-1)))=4,
$$
thus $A^2=4$. Conversely, $h^0\big(F,\cO_F(A)\big)=4$ for each initialized aCM line bundle with $A^2=4$. Hence, $\cO_F(A)$ is Ulrich, thus it fits into Sequence \eqref{seqDeterminantal} (see e.g. Proposition 3.7 of \cite{C--H1}).
\end{proof}

\begin{remark}
Let $F$ be determinantal, so that we have Sequence \eqref{seqDeterminantal}. Its dual is\begin{equation}
\label{seqDual}
0\longrightarrow\cO_{\p3}(-1)^{\oplus4}\mapright{{}^t\varphi}\cO_{\p3}^{\oplus4}\longrightarrow\cO_F(3h-A)\longrightarrow0,
\end{equation}
because $\Ext^1_{\p3}\big(\cO_F(A),\cO_{\p3}\big)\cong\cO_F(4h-A)$.

Thus also the general element in $\vert3h-A\vert$ is a curve enjoying the same properties as $A$. Notice that $\cO_F(3h-A)\not\cong\cO_F(A)$, because $(3h-A)A=14\ne4=A^2$.
\end{remark}

From the above results we deduce that if the polynomial $f$ defining the smooth quartic surface $F$ is the determinant of a $4\times4$ matrix with linear entries, then $\Pic(F)$ contains the even sublattice $\Lambda$ generated by $h$ and $A$ such that $h^2=A^2=4$ and $hA=6$. 

By definition the general element in $\vert h\vert$ is an irreducible smooth plane curve of degree $4$. Thanks to Proposition \ref{pDeterminantal} the general element in $\vert A\vert$ is an integral smooth projectively normal curve of degree $6$ and genus $3$(see Equalities \eqref{h^1}).

Conversely, let $A\subseteq\p3$ be an integral projectively normal sextic of genus $3$. We know that each smooth quartic surface $F\subseteq\p3$ containing $A$ is necessarily determinantal. The minimal degree of a surface containing such an $A$ is $3$, because $h^0\big(F,\cO_F(2h-A)\big)=0$. Moreover $\cO_F(4h-A)$ is globally generated (see e.g. Sequence \eqref{seqDual}), thus the general element in $\vert\cO_F(4h-A)\vert$ is integral. It follows that $\Pic(F)=\Lambda$ due to \cite{Lop}, Theorem II.3.1, if $F$ is general in the N\"other--Lefschetz sense. 

From now on, we will assume that the surface $F$ has the above property and we will call it a {\sl general determinantal quartic surface}\/. 

When we will consider such a surface $F$, we will always denote by $\cO_F(A)$ a line bundle corresponding to a projectively normal sextic of genus $3$ on $F$. Notice that there is an ambiguity because, as we pointed out above, the line bundle $\cO_F(3h-A)$ has the same properties as $\cO_F(A)$.

In order to complete the picture, we recall that not all determinantal quartic surfaces $F\subseteq\p3$ satisfy $\rk(\Pic(F))=2$.

E.g. a smooth quartic surface $F\subseteq\p3$ containing a twisted cubic curve $T$ and a line $E$ without common points is determinantal (see \cite{Do}, Exercise 4.14). Indeed the union of $T$ with any plane cubic linked to $E$ define a linear system whose general element $A$ is a projectively normal sextic of genus $3$. It is easy to check that $h$, $T$, $E$ are linearly independent in $\Pic(F)$, thus $\rk(\Pic(F))\ge 3$ in this case.

In \cite{C--G} the authors construct many examples of determinantal quartic surfaces $F\subseteq\p3$ with $\rk(\Pic(F))=3$ and show that even the classification of indecomposable Ulrich bundles of rank $2$ on them seems to be highly non--trivial.

\begin{remark}
\label{rIntersection}
A first interesting fact is that if $D\in \vert xh+yA\vert$, then 
\begin{equation}
\label{Intersection}
D^2=4x^2+12xy+4y^2,\qquad Dh=4x+6y,\qquad DA=6x+4y.
\end{equation}
More precisely it is very easy to check the following assertions: 
\begin{enumerate}
\item[(i)] $D^2\equiv0\pmod{16}$ if and only if both $x$ and $y$ are even;
\item[(ii)] $D^2\equiv4\pmod8$ if and only if $x$ and $y$ are not simultaneously even.
\end{enumerate}
The two above assertions imply that $D^2$ is always a multiple of $4$. Thus Equality \eqref{RRGeneral} imply that $\chi(\cO_F(D))$ is even. Moreover, Equalities \eqref{Intersection} imply that the equation $D^2=0$ has no non--trivial integral solutions, thus:
\begin{enumerate}
\item[(iii)] $D^2=0$ if and only if $D=0$.
\end{enumerate}
Irreducible and reduced fixed divisors $D$ on $F$ satisfy $D^2=-2$ (see the last Equality \eqref{h^1}), which is not a multiple of $4$. It follows that there are no fixed divisors on $F$. 
In particular $DU\ge0$ for each divisor $U\subseteq F$, hence (see Equalities \eqref{Intersection}):
\begin{enumerate}
\item[(iv)] $DU\ge 0$ is even for each divisor $U\subseteq F$.
\end{enumerate}
Combining the above assertions with Proposition \ref{SD}, we deduce the following consequence:
\begin{enumerate}
\item[(v)] each element in $\vert h\vert$, $\vert A\vert$, $\vert 3h-A\vert$ is certainly integral (but maybe singular).
\end{enumerate}
\end{remark}

Looking at Remark \ref{rIntersection} we know that the following result holds.

\begin{lemma}
\label{lFixed}
Let $F\subseteq\p3$ be a general determinantal quartic surface. 

If the divisor $D$ on $F$ is effective, then $\vert D\vert$ has no fixed components. 
\end{lemma}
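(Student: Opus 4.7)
The plan is to argue by contradiction: assume $D$ is non--zero effective and that $\vert D\vert$ has a non--trivial fixed part $\Phi\ne0$, so that one can write $D=\Phi+M$ in $\Pic(F)$ with $M$ effective, $\vert M\vert$ free from fixed components, and $h^0\big(F,\cO_F(D)\big)=h^0\big(F,\cO_F(M)\big)$.

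First I would rule out the case $M=0$. In this case $\vert D\vert=\{D\}$, so $h^0\big(F,\cO_F(D)\big)=1$; combined with $h^2\big(F,\cO_F(D)\big)=h^0\big(F,\cO_F(-D)\big)=0$ (since $D$ is effective and non--zero), Riemann--Roch \eqref{RRGeneral} forces $D^2\le-2$. On the other hand, Remark \ref{rIntersection} tells us that every irreducible curve on $F$ has self--intersection a positive multiple of $4$, so decomposing any effective divisor into its irreducible components gives $D^2\ge0$, a contradiction.

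Hence $M\ne0$. Since $\vert M\vert$ has no fixed components and $M^2=0$ is excluded by Remark \ref{rIntersection}(iii), Proposition \ref{SD}(ii) applies and yields $M^2>0$ together with $h^1\big(F,\cO_F(M)\big)=0$. Riemann--Roch then gives $h^0\big(F,\cO_F(M)\big)=2+M^2/2$ and $h^0\big(F,\cO_F(D)\big)=2+D^2/2+h^1\big(F,\cO_F(D)\big)$; equating the two and using $D=\Phi+M$ produces
\[
2\,h^1\big(F,\cO_F(D)\big)=M^2-D^2=-\Phi^2-2\,\Phi\cdot M\ \ge\ 0.
\]

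The contradiction now comes from positivity on $F$. Since $F$ carries no $(-2)$--curves, any two effective divisors on $F$ intersect non--negatively, so $\Phi\cdot M\ge0$ and $\Phi^2\ge0$; because $\Phi^2\in 4\bZ$ and $\Phi^2=0$ is excluded by Remark \ref{rIntersection}(iii), we even get $\Phi^2\ge4$. Therefore $\Phi^2+2\,\Phi\cdot M\ge4>0$, contradicting the displayed inequality. The only real ingredient beyond bookkeeping is the positivity package recorded in Remark \ref{rIntersection}, coming from the fact that the intersection form on $\Lambda$ takes values in $4\bZ$; once that is in hand the argument is essentially forced, and the only step that requires a little care is matching the two Riemann--Roch computations so that $\Phi^2+2\Phi\cdot M$ appears with the right sign.
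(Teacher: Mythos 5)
Your proof is correct. The paper itself disposes of this lemma in one line by pointing back to Remark \ref{rIntersection}: an irreducible, reduced \emph{fixed} divisor on a $K3$ surface has self--intersection $-2$ by the genus formula \eqref{h^1}, and $-2\notin4\bZ$, so no such curve exists on $F$; the (implicit) remaining step is the standard $K3$ fact that the fixed part of a complete linear system is supported on such $(-2)$--curves. You instead reprove that structural fact directly: writing $D=\Phi+M$ and comparing $h^0\big(F,\cO_F(D)\big)=h^0\big(F,\cO_F(M)\big)$ via Riemann--Roch (using $h^1\big(F,\cO_F(M)\big)=0$ from Proposition \ref{SD}) gives $\Phi^2+2\,\Phi\cdot M\le0$, which you then contradict using the positivity forced by the lattice ($C^2\in4\bZ$ and $C^2\ge-2$ for every irreducible curve, hence $C^2\ge4$ and all effective intersection numbers are non--negative). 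Both arguments ultimately rest on the same two lattice facts from Remark \ref{rIntersection} ($D^2\equiv0\pmod 4$ and $D^2=0\Leftrightarrow D=0$); yours has the merit of being self--contained and of not quoting the Saint--Donat structure theory of fixed parts, at the cost of being longer. The only places where you gloss slightly are the justification that every irreducible curve has $C^2\ge-2$ (this is the genus formula $p_a=1+C^2/2\ge0$, i.e.\ the last equality in \eqref{h^1}, and is needed before you can conclude $C^2\ge4$ from divisibility) and the phrase \lq\lq any two effective divisors intersect non--negatively\rq\rq, which should be justified by expanding into irreducible components and using $C_i^2\ge4$, $C_iC_j\ge0$; neither is a real gap.
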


In particular for each effective divisor $D$ on $F$ we have $D^2\ge0$ and  $h^0\big(F,\cO_F(D)\big)\ge2$. Moreover, if $D\ne0$, then from Remark \ref{rIntersection} we know that $D^2\ne0$, whence $D^2\ge4$ and  $h^0\big(F,\cO_F(D)\big)\ge4$. Finally, the ampleness of $h$ implies $Dh\ge2$.

The above discussion partially proves the following result.

\begin{lemma}
\label{lD^2ge4}
Let $F\subseteq\p3$ be a general determinantal quartic surface. 

The divisor $D$ on $F$ is effective if and only if either $D=0$, or $Dh\ge2$ and $D^2\ge4$. 
\end{lemma}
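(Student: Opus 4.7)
The plan is to prove both implications of the biconditional, noting that one direction has essentially been established already in the paragraph preceding the statement.

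First I would handle the implication ``effective $\Longrightarrow$ conditions''. This is basically recorded already: if $D$ is effective and nonzero, Lemma \ref{lFixed} says that $|D|$ has no fixed components, so Proposition \ref{SD}(i) gives $D^2\ge0$; by Remark \ref{rIntersection}, $D^2$ is a multiple of $4$ and equals $0$ only when $D=0$, so $D^2\ge4$. Ampleness of $h$ forces $Dh>0$, and Remark \ref{rIntersection}(iv) forces $Dh$ to be even, hence $Dh\ge2$. The case $D=0$ is of course included in the statement.

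For the converse direction I would invoke the general K3 dichotomy recalled right after Equation \eqref{RRGeneral}: any divisor $D$ on $F$ with $D^2\ge-2$ has either $D$ or $-D$ effective. Under the hypothesis $D^2\ge4$ this applies, so one of $D,-D$ is effective. Then I rule out the possibility that $-D$ is effective by intersecting with the ample class $h$: if $-D$ were effective, then $(-D)h\ge0$, contradicting the hypothesis $Dh\ge2>0$. Therefore $D$ itself is effective.

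The argument is essentially a packaging of material already stated, so the only step requiring any thought is the converse direction, and even there the main obstacle is really just recognizing that the Riemann--Roch / Serre duality dichotomy on a $K3$ surface is exactly what is needed to promote the numerical condition $D^2\ge4$ to effectivity, with $Dh\ge2$ serving only to pick out the correct sign. No deeper input on determinantality is required for this lemma beyond what has already been assembled in Remark \ref{rIntersection} and Lemma \ref{lFixed}.
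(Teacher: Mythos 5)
Your proof is correct and follows essentially the same route as the paper: the forward direction is the assembly of Lemma \ref{lFixed}, Proposition \ref{SD}, Remark \ref{rIntersection} and ampleness of $h$ already recorded before the statement, and the converse uses the Riemann--Roch dichotomy (either $D$ or $-D$ is effective when $D^2\ge-2$) together with the sign of $Dh$ to rule out $-D$ being effective, exactly as in the paper.
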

\begin{proof}
The \lq only if\rq\ part was proved above. Assume that $Dh\ge2$ and $D^2\ge4$. Either $D$ or $-D$ is effective, because $D^2\ge-2$. We have $(-D)h=-2$, thus $-D$ is not effective. we conclude that $D$ is effective.
\end{proof}

\begin{remark}
\label{rEffective}
Thus $D\in \vert xh+yA\vert$ is effective if and only if either $(x,y)=(0,0)$, or
$$
x^2+3xy+y^2\ge 1,\qquad 2x+3y\ge1.
$$
The above inequalities give the closed connected subset of the affine plane with coordinates $x$ and $y$ not containing the point $(x,y)=(0,0)$ and whose border is the graph of the function
$$
y=\frac{-3x+\sqrt{5x^2+4}}2
$$
(the \lq upper branch\rq\ of the hyperbola $x^2+3xy+y^2=1$).
\end{remark}

Proposition \ref{SD}  and the above Lemmas \ref{lFixed} and \ref{lD^2ge4} prove the following corollary.

\begin{corollary}
\label{cGG}
Let $F\subseteq\p3$ be a general determinantal quartic surface. 

The divisor $D$ on $F$ is effective if and only if $\cO_F(D)$ is globally generated.
\end{corollary}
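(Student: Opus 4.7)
The plan is to prove the two implications separately, relying essentially on the work already done in Lemma \ref{lFixed} and Proposition \ref{SD}; the genuinely new content of this corollary has effectively been absorbed into those earlier results, so what remains is mostly bookkeeping.

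For the forward direction, suppose $D$ is effective. If $D = 0$, then $\cO_F(D) \cong \cO_F$ is trivially globally generated. If $D \ne 0$, then Lemma \ref{lFixed} tells us that $\vert D \vert$ has no fixed components (this is where the general determinantal hypothesis enters the argument, via the intersection pairing computations of Remark \ref{rIntersection}). With no fixed components in hand, I can then invoke Proposition \ref{SD}(i) directly to conclude that $\cO_F(D)$ is globally generated. Lemma \ref{lD^2ge4} is consistent here and ensures in fact $D^2 \ge 4$, so one is actually in case (ii) of Proposition \ref{SD}, but the mere conclusion of (i) suffices.

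For the converse, suppose $\cO_F(D)$ is globally generated. Since $F$ is a non-empty smooth surface and $\cO_F(D)$ is a line bundle, its stalks are all non-zero, so the surjectivity of the evaluation map $H^0(F, \cO_F(D)) \otimes \cO_F \twoheadrightarrow \cO_F(D)$ forces $h^0(F, \cO_F(D)) > 0$. Any non-zero global section has zero scheme a divisor linearly equivalent to $D$, so $D$ is effective.

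The main obstacle in this whole chain was really establishing Lemma \ref{lFixed}, which in turn relied on the arithmetic observations in Remark \ref{rIntersection} that $D^2$ is always a multiple of $4$ on a general determinantal quartic, ruling out the $D^2 = -2$ values that would correspond to fixed $(-2)$-curves. Once that is granted, the corollary itself is an immediate packaging of Proposition \ref{SD}(i) together with the triviality that a globally generated line bundle has a non-zero section.
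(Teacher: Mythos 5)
Your proof is correct and follows the same route the paper intends: the forward direction is Lemma \ref{lFixed} feeding into Proposition \ref{SD}(i), and the converse is the elementary observation that a globally generated line bundle has a non-zero section, whose zero scheme is an effective divisor (possibly the zero divisor) linearly equivalent to $D$. This matches the paper, which simply cites Proposition \ref{SD} together with Lemmas \ref{lFixed} and \ref{lD^2ge4} as the proof.
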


\section{Examples of rank $2$ aCM bundles}
\label{sExample}
In this section we briefly recall Hartshorne--Serre correspondence, listing some examples of rank $2$ aCM bundles on general determinantal quartic surfaces. 

Let $\cE$ be a rank $2$ vector bundle on a smooth quartic surface $F\subseteq\p3$ with Chern classes $c_i:=c_i(\cE)$, $i=1,2$. If $s\in H^0\big(F,\cE\big)$ is non--zero, then its zero--locus $E:=(s)_0\subseteq F$ has codimension at most $2$. 

We can write $(s)_0=E\cup D$ where $E$ has pure codimension $2$ (or it is empty) and $D$ is a possibly zero divisor. We thus obtain a section $\sigma\in H^0\big(F,\cE(-D)\big)$ vanishing exactly on $E$. The Koszul complex of $\sigma$ gives the following exact sequence 
\begin{equation}
\label{seqIdeal}
0\longrightarrow \cO_F(D)\longrightarrow\cE\longrightarrow \cI_{E\vert F}(c_1-D)\longrightarrow 0.
\end{equation}
The degree of $E$ is $c_2(\cE(-D))$. 

Bertini's theorem for the sections of a vector bundle guarantees that if $\cE$ is globally generated (e.g. if $\cE$ is Ulrich), then $D=0$ for the general $s\in H^0\big(F,\cE\big)$.

Assume that the divisorial part $D$ of $(s)_0$ is not empty for general $s\in H^0\big(F,\cE\big)$. We can define a rational map $H^0\big(F,\cE)\dashrightarrow\Pic(F)$ defined by $s\mapsto\cO_F(D)$. Since $\Pic(F)$ is discrete, it follows that such a map is constant. Thus the divisorial part of the zero--loci of the sections in a suitable non--empty subset of $H^0\big(F,\cE)$ are linearly equivalent. 

The  above construction can be reversed. To this purpose we give the following well--known definition.

\begin{definition}
Let $F\subseteq\p3$ be a smooth quartic surface and let $\cG$ be a coherent sheaf on $F$.

We say that a locally complete intersection subscheme $E\subseteq F$ of dimension zero is Cayley--Bacharach (CB for short) with respect to $\cG$ if, for each $E'\subseteq E$ of degree $\deg(E)-1$, the natural morphism $H^0\big(F,\cI_{E\vert F}\otimes\cG\big)\to H^0\big(F,\cI_{E'\vert F}\otimes\cG\big)$ is an isomorphism.
\end{definition}

For the following result see Theorem 5.1.1 in \cite{H--L}.

\begin{theorem}
\label{tCB}
Let $F\subseteq\p3$ be a smooth quartic surface, $E\subseteq F$ a locally complete intersection subscheme of dimension $0$. 
 
Then there exists a vector bundle $\cE$ of rank $2$ on $F$ with $\det(\cE)={\mathcal L}$ and having a section $s$ such that $E=(s)_0$ if and only if $E$ is CB with respect to ${\mathcal L}$. 

If such an $\cE$ exists, then it is unique up to isomorphism if $h^1\big(F,{\mathcal L}^\vee\big)= 0$.
\end{theorem}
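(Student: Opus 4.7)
This is the classical Hartshorne--Serre correspondence on a smooth quartic $K3$; the cleanest approach would be to invoke Theorem~5.1.1 of \cite{H--L} directly, but the underlying strategy is as follows. For the $(\Rightarrow)$ direction, I start with the Koszul complex of $s$: since $E=(s)_0$ is locally complete intersection of pure codimension $2$ in $F$, this gives
$$
0 \lra \cO_F \lra \cE \lra \cI_{E\vert F} \otimes {\mathcal L} \lra 0.
$$
Applying $\Hom(-,\cO_F)$ and using the identifications $\Hom(\cI_{E\vert F}\otimes {\mathcal L},\cO_F)={\mathcal L}^\vee$ and $\Ext^1(\cI_{E\vert F}\otimes {\mathcal L},\cO_F)\cong \omega_E\otimes\omega_F^{-1}\otimes {\mathcal L}^\vee\vert_E$ (the latter by Grothendieck duality for $E\subseteq F$), local freeness of $\cE$ forces the connecting map $\cO_F\to \omega_E\otimes\omega_F^{-1}\otimes {\mathcal L}^\vee\vert_E$ to be surjective at every point of $E$. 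Taking global sections and dualizing via Serre duality on $F$ (recall $\omega_F\cong \cO_F$), this surjectivity is exactly the Cayley--Bacharach condition for $E$ with respect to ${\mathcal L}$.

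For the converse I must produce an extension
$$
0 \lra \cO_F \lra \cE \lra \cI_{E\vert F}\otimes {\mathcal L} \lra 0
$$
with locally free middle term, which amounts to choosing an extension class $\xi\in\ext^1(\cI_{E\vert F}\otimes {\mathcal L},\cO_F)$ whose local component at each point of $E$ is a generator. The local--to--global spectral sequence for $\ext$ yields the five-term sequence
$$
0 \lra H^1(F,{\mathcal L}^\vee) \lra \ext^1(\cI_{E\vert F}\otimes {\mathcal L},\cO_F) \lra H^0\bigl(F,\omega_E\otimes\omega_F^{-1}\otimes {\mathcal L}^\vee\vert_E\bigr) \lra H^2(F,{\mathcal L}^\vee),
$$
and the classical criterion is that the resulting $\cE$ is locally free precisely when the image of $\xi$ in $H^0$ is a section nowhere vanishing on $E$. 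The CB property, dualized via Serre duality on $F$, is exactly the statement that such a nowhere--vanishing section lies in the image of the third arrow, and any such $\xi$ produces the required $\cE$.

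For uniqueness under $h^1(F,{\mathcal L}^\vee)=0$, the five--term sequence becomes an injection
$$
\ext^1(\cI_{E\vert F}\otimes {\mathcal L},\cO_F) \hookrightarrow H^0\bigl(F,\omega_E\otimes\omega_F^{-1}\otimes {\mathcal L}^\vee\vert_E\bigr),
$$
so $\xi$ is determined by its local image, and the CB condition forces this image to be determined up to scalar by $E$ alone; hence $\cE$ is unique up to isomorphism. The main technical hurdle is the precise identification, via Serre duality on $F$, of the image of the third arrow with the CB--dual subspace of $H^0\bigl(\omega_E\otimes\omega_F^{-1}\otimes {\mathcal L}^\vee\vert_E\bigr)$: once this bookkeeping is in place the theorem follows formally.
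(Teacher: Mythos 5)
The paper offers no proof of this statement beyond the citation of Theorem~5.1.1 of \cite{H--L}, so your opening remark --- that the cleanest route is to invoke that reference --- is exactly what the author does. Your sketch of the existence equivalence is the standard Serre--correspondence argument and is correct in outline: the Koszul sequence, the identification $\Ext^1\big(\cI_{E\vert F}\otimes{\mathcal L},\cO_F\big)\cong\omega_E\otimes\omega_F^{-1}\otimes{\mathcal L}^\vee\vert_E$, the local--to--global five--term sequence, the criterion that $\cE$ is locally free exactly when the image of the extension class generates every stalk of this $\Ext^1$, and the Serre--duality translation of that condition into the Cayley--Bacharach property.

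The uniqueness argument, however, contains a genuine gap. When $h^1\big(F,{\mathcal L}^\vee\big)=0$ the group $\ext^1\big(\cI_{E\vert F}\otimes{\mathcal L},\cO_F\big)$ does inject into $H^0\big(E,\Ext^1\big)$, with image the kernel $K$ of the map to $H^2\big(F,{\mathcal L}^\vee\big)$; but your assertion that the CB condition forces the nowhere--vanishing class to be \emph{determined up to scalar by $E$ alone} is not justified. CB guarantees that $K$ contains a nowhere--vanishing element; it puts no upper bound on $\dim K$. If $\dim K\ge2$ there are pairwise non--proportional nowhere--vanishing classes, and since the only endomorphisms of $\cO_F$ and of $\cI_{E\vert F}\otimes{\mathcal L}$ are scalars, non--proportional classes give non--isomorphic extensions. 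Concretely, on a general determinantal quartic take ${\mathcal L}=\cO_F(A-h)$ and $E$ two general reduced points: then $h^0\big(F,{\mathcal L}\big)=h^0\big(F,{\mathcal L}^\vee\big)=h^1\big(F,{\mathcal L}^\vee\big)=0$, so $K\cong k^2$, every class $(a,b)$ with $ab\ne0$ yields a locally free $\cE$ with $h^0\big(F,\cE\big)=1$, and the uniqueness up to scalar of the section shows these bundles are pairwise non--isomorphic. Hence the uniqueness clause needs more than $h^1\big(F,{\mathcal L}^\vee\big)=0$. In the situations where the paper actually uses it, either $E$ is a single point or $\cE$ is aCM, which forces $\dim\ext^1\big(\cI_{E\vert F}\otimes{\mathcal L},\cO_F\big)=h^1\big(F,\cI_{E\vert F}\otimes{\mathcal L}\big)\le1$ and hence genuine uniqueness; that dimension count, and not the CB condition, is the input your argument is missing.
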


Let $F\subseteq\p3$ be a smooth quartic surface. We recall that in \cite{C--K--M1} the author proved the existence on $F$ of aCM bundles $\cE$ with $c_1(\cE)=3h$ and $c_2(\cE)=14$. To this purpose the authors proved the existence of suitable sets of $14$ points which are CB with respect to $\cO_F(3h)$.

Similarly, in \cite{C--N} the author proved the existence on $F$ of aCM bundles $\cE$ with $c_1(\cE)=2h$ and $c_2(\cE)=8$. Essentially two cases are possible. 

In the first case, the zero--locus of a general section of $\cE$ has pure dimension $0$: this case occurs without further hypothesis on $F$. 

In the second case, the zero--loci of non--zero sections of $\cE$ define a linear system $\vert D\vert$ whose general element is a smooth integral sextic of genus $3$ such that $h^0\big(F,\cO_F(D-h)\big)=h^0\big(F,\cO_F(2h-D)\big)=0$. In particular $F$ must be a determinantal quartic surface in this second case.

Due to the results listed in the previous section we know that this case can occur only if $F$ is determinantal. In particular it occurs when $F$ is a general determinantal quartic surface $F$.

We now construct examples of rank $2$ aCM bundles starting from suitable sets of points $E\subseteq F$. Recall that for each closed subscheme $X\subseteq F$ we have an exact sequence
\begin{equation}
\label{seqStandard}
0\longrightarrow\cI_{X\vert F}\longrightarrow\cO_F\longrightarrow\cO_X\longrightarrow0.
\end{equation}

In what follows we will present a series of examples of initialized indecomposable aCM rank $2$ bundles on a quartic surface $F\subseteq\p3$ with plane section $h$. We stress that the bundles defined in Examples \ref{c_1=1} and \ref{c_1=0,-1} exist on each smooth quartic surface. 

\begin{example}
\label{c_1=1}
Let $E\subseteq F$ be a $0$--dimensional subscheme arising as follows:
\begin{itemize}
\item{$m=3$} $E$ is the complete intersection of a line and a cubic surface;
\item{$m=4$} $E$ is the complete intersection of two conics inside a plane;
\item{$m=5$} $E$ is the degeneracy locus of a $5\times5$ skew symmetric matrix $\Delta$ with linear entries.
\end{itemize}
It is clear that $E$ is CB with respect to $\cO_F(h)$ in the first two cases. In the third one, a minimal free resolution of $H^0_*(\cO_E)$  over $\p3$ is given by the Buchsbaum--Eisenbud complex (see \cite{B--E}). Thus again $E$ is CB with respect to $\cO_F(h)$ thanks to  Lemma (4.2) of \cite{Sch}.

Theorem \ref{tCB} thus yields the existence of a rank $2$ bundle $\cE$ fitting into Sequence \eqref{seqIdeal} with $c_1=h$, $c_2=m$ and $D=0$, i.e.
\begin{equation}
\label{seqc_1=1}
0\longrightarrow \cO_F\longrightarrow \cE\longrightarrow \cI_{E\vert F}(h)\longrightarrow 0,
\end{equation}
The cohomology of the above sequence twisted by $\cO_F(-h)$ gives
$$
h^2\big(F,\cE\big)=h^0\big(F,\cE^\vee\big)=h^0\big(F,\cE(-h)\big)=0,
$$
i.e. $\cE$ is initialized and the cohomology of Sequence \eqref{seqc_1=1} yields the exact sequence. 
$$
0\longrightarrow H^1\big(F,\cE\big)\longrightarrow H^1\big(F,\cI_{E\vert F}(h)\big)\longrightarrow H^2\big(F,\cO_F\big)\longrightarrow 0.
$$
By definition $h^0\big(F,\cI_{E\vert F}(h)\big)=5-m$, whence $h^1\big(F,\cI_{E\vert F}(h)\big)=1$, from the cohomology of Sequence \eqref{seqStandard}. We conclude that $h^1\big(F,\cE\big)=0$.

The cohomology of Sequence \eqref{seqc_1=1} twisted by $\cO_F(th)$ gives 
$$
h^1\big(F,\cE(th)\big)=h^1\big(F,\cI_{E\vert F}((t+1)h)\big),\qquad t\ge1.
$$ 

In the cases $m=3,4$ the minimal free resolution of $H^0_*(\cO_E)$ over $\p3$ is given by a Koszul complex. In the case $m=5$, it is given by the Buchsbaum--Eisenbud complex as already pointed out above. In any case it is easy to check that the Hilbert function of $E$ is $(1,m-1,m,\dots)$, hence the natural map $H^0\big(F,\cO_F(th)\big)\to H^0\big(E,\cO_E(th)\big)$ is surjective for $t\ge1$, thus $h^1\big(F,\cI_{E\vert F}(th)\big)=0$, in the same range (see the cohomology of Sequence \eqref{seqStandard}).
We conclude that $h^1\big(F,\cE(th)\big)=0$ for $t\ge0$.
By duality
$$
h^1\big(F,\cE(th)\big)=h^1\big(F,\cE^\vee(-th)\big)=h^1\big(F,\cE((-t-1)h)\big)=0,\qquad -t-1\ge0,
$$
i.e. $t\le-1$.
We conclude that $\cE$ is aCM. 

If $\cE\cong\cO_F(U)\oplus\cO_F(V)$, then $U+V=h$ and $c_2=UV$ which is even (see Remark \ref{rIntersection}). Moreover both $\cO_F(U)$ and $\cO_F(V)$ are aCM and at least one of them must be initialized, the other being either initialized or without sections. 

Let $\cO_F(U)$ be initialized. If $U=A$, then $V=h-A$, thus $UV=2<3\le c_2$. The case $U=3h-A$ can be excluded in the same way. If $U=0$, then $V=h$, a contradiction. We conclude that $\cE$ is indecomposable.
\end{example}

\begin{example}
\label{c_1=0,-1}
Let $E\subseteq F$ be a set of $m=1,2$ points. Then $E$ is trivially CB with respect to $\cO_F((m-2)h)$, thus there is a rank $2$ bundle $\cE$ fitting into Sequence \eqref{seqIdeal} with $c_1=(m-2)h$, $c_2=m$ and $D=0$. 

As in the Example \ref{c_1=1} one easily checks that such an $\cE$ is aCM, initialized and indecomposable.
\end{example}

For all the following examples we obviously need to restrict to a determinantal quartic surface $F\subseteq\p3$. As usual $h$ denotes the class of the plane section and $A$ the class of a projectively normal integral sextic curve  of genus $3$ on $F$.

\begin{example}
\label{c_1=A-h}
Let $E\subseteq F$ be a single point. The line bundle $\cO_F(A-h)$ is not effective, thus $E$ is again trivially CB with respect to $\cO_F(A-h)$. In particular there is a rank $2$ bundle $\cE$ fitting into an exact sequence of the form
\begin{equation}
\label{seqc_1=A-h}
0\longrightarrow \cO_F\longrightarrow \cE\longrightarrow \cI_{E\vert F}(A-h)\longrightarrow 0,
\end{equation}
The cohomology of Sequence \eqref{seqc_1=A-h} twisted by $\cO_F(-h)$ gives $h^0\big(F,\cE(-h)\big)=0$, because $\cO_F(A-2h)$ is not effective. Thus $\cE$ is initialized. The same sequence twisted by $\cO_F(th)$ also yields the existence of an injective map $H^1\big(F,\cE(th)\big)\to H^1\big(F,\cI_{E\vert F}(A+(t-1)h)\big)$ for each $t$.

The cohomology of Sequence \eqref{seqStandard} twisted by $\cO_F(A+(t-1)h)$ yields $h^1\big(F,\cI_{E\vert F}(A+(t-1)h)\big)=0$ for $t\ge1$. Hence $h^1\big(F,\cE(th)\big)=0$, for $t\ge1$.

The cohomology of Sequence \eqref{seqStandard} twisted by $\cO_F(-th)$ yields $h^1\big(F,\cI_{E\vert F}(-th)\big)=0$ for $t\le0$. Sequence \eqref{seqc_1=A-h} yields the existene of surjective maps
$$
H^1\big(F,\cO_F((1-t)h-A)\big)\to H^1\big(F,\cE^\vee(-th)\big),\qquad t\le 0.
$$
Since $\cO_F(A)$ is aCM, the same is true for $\cO_F(-A)$, thus $h^1\big(F,\cE(th)\big)=h^1\big(F,\cE^\vee(-th)\big)=0$ for $t\le 0$.
We conclude that $\cE$ is aCM. 

If $\cE\cong\cO_F(U)\oplus\cO_F(V)$, then $c_2=UV$ which is even (see Remark \ref{rIntersection}). Since $c_2=1$, it follows that $\cE$ is indecomposable. Finally $c_1=A-h$ by construction.

Substituting $A$ with $3h-A$ in the above construction we are also able to construct an example of initialized indecomposable aCM bundle of rank $2$ with $c_1=2h-A$.

We can construct two more bundles. Indeed, $\cE^\vee(h)$ is indecomposable and aCM too. Moreover the cohomology of Sequence \eqref{seqc_1=A-h} suitably twisted also implies that $\cE^\vee(h)$ is initialized. Computing the Chern classes of $\cE^\vee(h)$ we obtain $c_1=3h-A$ and $c_2=3$. Again the substitution of $A$ with $3h-A$ gives an example of initialized indecomposable aCM bundle of rank $2$ with $c_1=A$ and $c_2=3$.
\end{example}

\begin{example}
\label{c_1=A}
The curve $A\subseteq F$ has genus $3$ and it is not hyperelliptic (see Proposition \ref{pDeterminantal}), thus it supports infinitely many complete $g^1_4$. Fix a smooth divisor $E$ in one of these $g^1_4$. 
The natural composite map $\cO_F^{\oplus2}\to\cO_A^{\oplus2}\to\cO_A(E)$ is surjective on global sections and its kernel is a vector bundle $\cF$ on $F$. The bundle $\cE:=\cF^\vee$ is  called the {\sl Lazarsfeld--Mukai bundle of $\cO_A(E)$}. By definition we have the exact sequence
\begin{equation}
\label{seqMukai}
0\longrightarrow \cE^\vee\longrightarrow\cO_F^{\oplus2}\longrightarrow \cO_A(E)\longrightarrow 0.
\end{equation}
Clearly $c_1=A$ and $c_2=4$. In what follows we will quickly show that $\cE$ is aCM.

The cohomology Sequence \eqref{seqMukai} yields $h^1\big(F,\cE\big)=h^1\big(F,\cE^\vee\big)=0$ by definition. Twisting Sequence \eqref{seqMukai} by $\cO_F(-th)$, taking its cohomology we also obtain $h^1\big(F,\cE(-th)\big)=0$ for $t\ge1$, because $h^0\big(A,\cO_A(E-th)\big)=0$, due to the inequality $\deg(\cO_A(E-th))=4-6t\le -2$ in the same range.

We have $\Ext^1_F\big(\cO_A(E),\cO_F\big)\cong\omega_A(-E)$. Adjunction formula on $F$ thus implies $\omega_A\cong\cO_A(A)$, hence the dual of Sequence \eqref{seqMukai} is 
\begin{equation}
\label{seqMukaiDual}
0\longrightarrow \cO_F^{\oplus2}\longrightarrow\cE\longrightarrow \cO_A(A-E)\longrightarrow 0.
\end{equation}
Thus, also for $t\ge1$, we obtain
$$
h^1\big(F,\cE(th)\big)\le h^1\big(A,\cO_{A}(A-E+th)\big)=h^0\big(A,\cO_A(E-th)\big)=0.
$$
We conclude that $\cE$ is aCM. The cohomology of Sequence \eqref{seqMukaiDual} twisted by $\cO_F(-h)$ and the vanishing $h^0\big(A,\cO_A(E-h)\big)=0$ (see above) also imply that $\cE$ is initialized.

Assume that  $\cE\cong\cO_F(U)\oplus\cO_F(V)$. We thus have  $U+V=A$ and $UV=4$. As in Example \ref{c_1=1} we can assume that $\cO_F(U)$ is aCM and initialized and that $\cO_F(V)$ is either initialized or without sections. If $U=A$, then $V=0$, whence $UV=0$ a contradiction. Similarly $U\ne0$. If $U=3h-A$, then $V=2A-3h$ and again $UV=0$.

The same argument with $3h-A$ instead of $A$ yields the existence of initialized indecomposable aCM bundles of rank $2$ with $c_1=3h-A$ and $c_2=4$.

Also in this case we can construct two more bundles. Indeed $\cE^\vee(h)$ is indecomposable and aCM too. Moreover, Equality \eqref{RRGeneral} and the vanishing $h^2\big(F,\cE^\vee(th)\big)=h^0\big(F,\cE(-th)\big)=0$, $t=1,2$, return
$$
h^0\big(F,\cE^\vee(h)\big)=0,\qquad
h^0\big(F,\cE^\vee(2h)\big)=6.
$$
It follows that  $\cE^\vee(2h)$ is initialized. For such a bundle $c_1=4h-A$ and $c_2=8$. The substitution of $A$ with $3h-A$ gives an example of initialized indecomposable aCM bundle of rank $2$ with $c_1=A+h$ and $c_2=8$.

It is interesting to notice that the bundle $\cE$ (hence also $\cE^\vee(2h)$) described above has an interesting property. Indeed, one can prove using Lemma 2.4 of \cite{Knu2} that both $\cO_F(A)$, and $\cO_F(3h-A)$ are very ample on $F$, thus they define two other different embeddings $F\subseteq\p3$ with respective hyperplane line bundles $\cO_F(A)$ and $\cO_F(3h-A)$. Proposition 2.3 of \cite{Wa2} guarantees that $\cE$ is aCM also for these embeddings: indeed, it is the bundle with $m=4$ of Example \ref{c_1=1} with respect to these embeddings.
\end{example}

\section{On the zero--locus of general sections}
\label{sZero}

In this section we will assume that $F\subseteq\p3$ is a general determinantal quartic surface. 

\begin{lemma}
\label{lNonEmpty}
Let $\cE$ be an indecomposable aCM bundle of rank $2$ on a general determinantal quartic surface $F\subseteq\p3$.

If $s\in H^0\big(F,\cE\big)$, then $(s)_0\ne\emptyset$.
\end{lemma}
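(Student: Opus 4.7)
I would argue by contradiction. Suppose $(s)_0=\emptyset$ for some nonzero $s\in H^0(F,\cE)$. Then $s$ is a nowhere--vanishing section, so the Koszul complex of $s$ degenerates to a short exact sequence
\begin{equation*}
0\longrightarrow\cO_F\longrightarrow\cE\longrightarrow\cO_F(c_1)\longrightarrow 0,
\end{equation*}
where $c_1:=c_1(\cE)\in\Pic(F)$. Indecomposability of $\cE$ forces this extension to be non--split, so its class in $\Ext^1_F(\cO_F(c_1),\cO_F)\cong H^1(F,\cO_F(-c_1))$ is nonzero. The plan is then to derive a contradiction by combining two bounds on $h^1(F,\cO_F(-c_1))$: an upper bound coming from the aCM property, and a parity bound coming from the lattice $\Lambda$.

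For the upper bound I would dualize, obtaining $0\to\cO_F(-c_1)\to\cE^\vee\to\cO_F\to 0$, where $\cE^\vee$ is aCM because $\omega_F\cong\cO_F$ gives $h^1(F,\cE^\vee(th))=h^1(F,\cE(-th))=0$ via Serre duality. The relevant piece of the long exact sequence reads
\begin{equation*}
H^0(F,\cE^\vee)\longrightarrow H^0(F,\cO_F)\longrightarrow H^1(F,\cO_F(-c_1))\longrightarrow H^1(F,\cE^\vee)=0,
\end{equation*}
so $h^1(F,\cO_F(-c_1))\le h^0(F,\cO_F)=1$. Combined with non--splittingness, this forces the exact value $h^1(F,\cO_F(-c_1))=1$.

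For the parity bound I would use the general determinantal hypothesis $\Pic(F)=\Lambda=\bZ h\oplus\bZ A$ and Remark \ref{rIntersection}: every divisor $D$ on $F$ has $D^2\equiv 0\pmod 4$, so Riemann--Roch gives $\chi(\cO_F(D))=2+D^2/2\equiv 0\pmod 2$ for every $D\in\Pic(F)$. Combined with the K3 Serre duality identification $h^2(\cO_F(-c_1))=h^0(\cO_F(c_1))$, this yields
\begin{equation*}
h^1(F,\cO_F(-c_1))\equiv h^0(F,\cO_F(c_1))+h^0(F,\cO_F(-c_1))\pmod 2.
\end{equation*}
A short case analysis according to whether $c_1$, $-c_1$, or neither is effective then shows the right hand side is always even: when $\pm c_1$ is effective and nonzero, Lemma \ref{lFixed} together with Proposition \ref{SD}(ii) gives $h^0(\cO_F(\pm c_1))=2+c_1^2/2$, which is even because $c_1^2/2\equiv 0\pmod 2$; the non--effective summand contributes $0$, and the case $c_1=0$ contributes $2$. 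Hence $h^1(F,\cO_F(-c_1))$ is even, contradicting the value $1$ obtained above.

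The main obstacle I expect is the residual case in which neither $c_1$ nor $-c_1$ is effective: here Proposition \ref{SD} provides no direct $h^1$--vanishing, and the whole argument rests on the mod--$4$ arithmetic of the intersection form on $\Lambda$ (Remark \ref{rIntersection}) to push $h^1$ into the even integers. It is exactly this feature of general determinantal quartics --- not shared by arbitrary smooth quartics --- that makes the parity step close.
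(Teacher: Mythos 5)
Your argument is correct and follows essentially the same route as the paper: both extract the non-split extension $0\to\cO_F\to\cE\to\cO_F(c_1)\to0$, use the aCM vanishing to pin $h^1\big(F,\cO_F(-c_1)\big)=1$, and then derive a contradiction from the fact that $c_1^2\equiv0\pmod 4$ on $\Lambda$ makes $\chi(\cO_F(c_1))$ even. The only cosmetic difference is that the paper runs the parity the other way (oddness of $h^0+h^2$ forces $\pm c_1$ effective, whence $h^1=0$ by Proposition \ref{SD}), while you verify evenness of $h^0+h^2$ case by case; the ingredients are identical.
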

\begin{proof}
Assume that $(s)_0=\emptyset$. Then Sequence \eqref{seqIdeal} becomes
$$
0\longrightarrow \cO_F\longrightarrow\cE\longrightarrow\cO_F(c_1)\longrightarrow0,
$$
where $h^1\big(F,\cO_F(c_1)\big)=h^1\big(F,\cO_F(-c_1)\big)\ne0$, because $\cE$ is indecomposable. The cohomology of the above sequence gives a monomorphism $H^1\big(F,\cO_F(c_1)\big)\to H^2\big(F,\cO_F\big)$, thus $h^1\big(F,\cO_F(c_1)\big)=1$.

Remark \ref{rIntersection} and Equality \eqref{RRGeneral} imply that $\chi(\cO_F(c_1))$ is even, thus either $\cO_F(c_1)$ or $\cO_F(-c_1)$ is effective. Hence $c_1^2\ge4$ (see Lemma \ref{lD^2ge4}), thus $h^1\big(F,\cO_F(c_1)\big)=0$ (see Lemma \ref{lFixed} and Proposition \ref{SD}), a contradiction.
\end{proof}

Trivially $h^1\big(F,\cE^\vee(2h)\big)=0$: moreover
$h^2\big(F,\cE^\vee(h)\big)=h^0\big(F,\cE(-h)\big)=0$.
The above vanishings prove the following lemma.

\begin{lemma}
\label{lGG}
Let $\cE$ be an initialized, indecomposable aCM bundle of rank $2$ on a general determinantal quartic surface $F\subseteq\p3$.

Then $\cE^\vee(3h)$ is regular in the sense of Castelnuovo--Mumford. In particular, both $\cE^\vee(3h)$ and $\cO_F(6h-c_1)$ are globally generated.
\end{lemma}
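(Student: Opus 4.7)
My plan is to verify the two cohomological hypotheses of Castelnuovo--Mumford regularity (in the sense of $\p3$) for $\cE^\vee(3h)$, then invoke the standard theorem that a $0$--regular coherent sheaf on projective space is globally generated, and finally pass to the determinant for the statement about $\cO_F(6h-c_1)$.

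First I would view $\cE^\vee(3h)$, after pushforward along the inclusion $F\hookrightarrow\p3$, as a coherent sheaf on $\p3$. To establish $0$--regularity I need $H^i\big(\p3,\cE^\vee(3h)\otimes\cO_{\p3}(-i)\big)=0$ for $i\ge1$, equivalently $h^i\big(F,\cE^\vee((3-i)h)\big)=0$ for $i=1,2,3$. The case $i=3$ is free, since $F$ has dimension $2$. For $i=1$, the vanishing $h^1\big(F,\cE^\vee(2h)\big)=0$ is the one the paper flags as trivial: Serre duality on $F$ (using $\omega_F\cong\cO_F$) rewrites it as $h^1\big(F,\cE(-2h)\big)$, which is zero because $\cE$ is aCM. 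For $i=2$, Serre duality gives $h^2\big(F,\cE^\vee(h)\big)=h^0\big(F,\cE(-h)\big)$, which is zero because $\cE$ is initialized. This is exactly the pair of vanishings stated just before the lemma.

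Next I would apply Mumford's regularity theorem: a $0$--regular coherent sheaf on $\p3$ is generated by its global sections, which yields the first part of the conclusion, namely that $\cE^\vee(3h)$ is globally generated. For the second part, I would use the elementary fact that a top exterior power of a globally generated locally free sheaf of rank $r$ is again globally generated: at any $x\in F$ one picks sections of $\cE^\vee(3h)$ whose values at $x$ span the fibre, and the corresponding wedge product gives a section of $\det\big(\cE^\vee(3h)\big)$ which is nonzero at $x$. The computation $\det\big(\cE^\vee(3h)\big)=\det(\cE^\vee)\otimes\cO_F(6h)=\cO_F(6h-c_1)$ then finishes the argument.

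There is no genuine obstacle here; the only point that demands a moment of care is the Serre duality identification $h^1\big(F,\cE^\vee(2h)\big)=h^1\big(F,\cE(-2h)\big)$, which relies on $\omega_F\cong\cO_F$, and the passage from global generation of the rank--$2$ bundle to global generation of its determinant. Everything else is bookkeeping on top of the aCM and initialized hypotheses plus the standard regularity criterion on $\p3$.
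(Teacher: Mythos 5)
Your proof is correct and follows exactly the paper's route: the author establishes the same two vanishings $h^1\big(F,\cE^\vee(2h)\big)=0$ (from the aCM hypothesis via Serre duality) and $h^2\big(F,\cE^\vee(h)\big)=h^0\big(F,\cE(-h)\big)=0$ (from initializedness) immediately before the lemma and deduces $0$--regularity, hence global generation, with the determinant statement following as you describe. No issues.
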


Let $\cE$ be as above.
If $s\in H^0\big(F,\cE\big)$ is a non--zero section, then $(s)_0\ne\emptyset$. From now on we will assume that $(s)_0=E\cup D$ where $E$ has pure dimension $0$ (or it is empty) and $D$ is an effective divisor. 

Notice that the cohomology of Sequence \eqref{seqIdeal} gives an injective map $H^0\big(F,\cO_F(D-h)\big)\to H^0\big(F,\cE(-h)\big)$, thus $\cO_F(D)$ is initialized too, or, in other words, $\cO_F(D-h)$ is not effective.

Now assume that $c_1=xh+yA$ is effective. It follows from Lemma \ref{lGG} and Remark \ref{rEffective} that $c_1$ is either $0$, or $6h$, or the inequalities
\begin{gather*}
y^2+3xy+x^2\ge1,\qquad 2x+3y\ge1,\\
y^2-3(6-x)y+(6-x)^2\ge1,\qquad 2(6-x)-3y\ge1,
\end{gather*}
must hold. Thus, in the latter case, the point $(x,y)$ must lie in the region delimited by the graphs of the two functions
$$
y=\frac{-3x+\sqrt{5x^2+4}}{2},\qquad y=\frac{3(6-x)-\sqrt{5(6-x)^2+4}}{2}.
$$
By drawing the graphs of the above functions (we made such computations using the software Grapher), we obtain Picture 1 below.

\vglue-0.5truecm

\begin{figure}[h!!]
\centering
\includegraphics[draft=false,width=8cm,height=5.33cm]{c_1-effettivoBis.pdf}
\vglue-0.5truecm
\caption{}
\end{figure}

\noindent The values of $c_1$ we are interested in are in the region bounded by the graphs of the two functions above,  plus the two points $(0,0)$ and $(6,0)$ (see above). Thus $c_1$ is in the following list:
\begin{equation}
\label{listc_1}
\begin{gathered}
0,\quad A,\quad 2A,\quad h,\quad h+A,\quad 2h,\quad 2h+A,\quad 3h-A,\quad 3h,\quad 3h+A,\\
 4h-A,\quad 4h,\quad 5h-A,\quad 5h,\quad 6h-2A,\quad 6h-A,\quad 6h.
\end{gathered}
\end{equation}

We are ready to prove the main result of the present section.

\begin{theorem}
\label{tc_1Classification}
Let $\cE$ be an initialized indecomposable aCM bundle of rank $2$ on a general determinantal quartic surface $F\subseteq\p3$.

Let $s\in H^0\big(F,\cE\big)$ and $(s)_0=E\cup D$ be as above. Then exactly one of the following cases occurs:
\begin{enumerate}[label=(\roman{*})]
\item $c_1-D$ is effective;
\item $E$ is a point, $D=0$ and $c_1$ is not effective;
\item $E=\emptyset$, $D\in \vert A\vert \cup\vert 3h-A\vert$ and $c_1=2h$.
\end{enumerate}
\end{theorem}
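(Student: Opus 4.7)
The approach is to split on whether $c_1 - D$ is effective.  If it is, we are in case (i).  Otherwise $H^0(\cO_F(c_1 - D)) = 0$, so the cohomology of Sequence \eqref{seqIdeal} collapses to $H^0(\cE) = H^0(\cO_F(D))$; twisting by $-h$ and using that $\cE$ is initialized then forces $D - h$ non-effective.  The remaining analysis splits on whether $D = 0$.

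If $D = 0$, then $h^0(\cE) = 1$ and $E \neq \emptyset$ by Lemma \ref{lNonEmpty}.  Twisting \eqref{seqIdeal} by $-c_1$ and using $H^0(\cI_{E|F}) = 0$ gives $h^2(\cE) = h^0(\cO_F(-c_1))$; comparing with Riemann--Roch \eqref{RRGeneral} yields $c_2 = 1$ directly when $-c_1$ is effective (via \eqref{h^1}).  In the remaining subcase (both $\pm c_1$ non-effective and $c_1 \neq 0$), Riemann--Roch on $\cO_F(c_1)$ combined with Remark \ref{rIntersection} and the constraint $c_2 \geq 1$ gives $c_1^2 = -4$ and again $c_2 = 1$.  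Either way $E$ is a single point, yielding case (ii).

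If $D \neq 0$, then $D^2 > 0$ by Lemma \ref{lD^2ge4}, and Proposition \ref{SD}(ii) gives $h^1(\cO_F(D)) = h^2(\cO_F(D)) = 0$.  Coupled with $h^1(\cE) = 0$, the cohomology of \eqref{seqIdeal} forces $H^1(\cI_{E|F}(c_1 - D)) = 0$; the ideal sheaf sequence of $E$ twisted by $(c_1 - D)$ then forces $H^0(\cO_E) = 0$, so $E = \emptyset$ and \eqref{seqIdeal} reduces to $0 \to \cO_F(D) \to \cE \to \cO_F(c_1 - D) \to 0$.  Repeating the same cohomology argument at every twist $th$ shows that $\cO_F(D)$ is aCM, so Proposition \ref{pWa} combined with Remark \ref{rIntersection} (which excludes $D^2 \in \{-2, 0, 2\}$) forces $D \in |A| \cup |3h - A|$.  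Exploiting $H^2(\cO_F(D+th)) = 0$ for $t \in \{-2, \ldots, 2\}$ in the twisted cohomology of \eqref{seqIdeal} then yields $H^1(\cO_F(c_1 - D + th)) = 0$ for the same range.  Applying Riemann--Roch and explicit lattice enumeration to $L := c_1 - D$ (which satisfies $L^2 = -4$ together with the arithmetic constraint $2x + 3y = \pm 1$ in the $\Lambda$-coordinates $L = xh + yA$) leaves $L \in \{h - A, A - h, 2h - A, A - 2h\}$; requiring $c_1$ effective narrows $c_1 \in \{h, 2h\}$, and since $h^1(\cO_F(2A - h)) = 0$ forces the extension to split when $c_1 = h$, indecomposability rules this out and leaves $c_1 = 2h$, i.e., case (iii).

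The hardest step is this final determination of $c_1 = 2h$: it requires combining the $H^1$-vanishings for $\cO_F(c_1 - D)$ over a range of twists (extracted from aCM of $\cE$ via the twisted cohomology), explicit enumeration in the rank-$2$ lattice $\Lambda$, and the non-splitting computation on the defining extension.  Mutual exclusivity of (i), (ii), (iii) is immediate from their distinct constraints on $c_1$ and $D$.
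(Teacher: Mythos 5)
Your overall architecture matches the paper's: assume $c_1-D$ is not effective, split on $D=0$ versus $D\ne0$, and in the latter case show $E=\emptyset$ and force $D\in\vert A\vert\cup\vert 3h-A\vert$ via the aCM property and initializedness of $\cO_F(D)$ together with Proposition \ref{pWa} and Remark \ref{rIntersection}. Your handling of the $D=0$ branch is a valid, slightly different Riemann--Roch computation in place of the paper's chain $1\le\deg(E)\le h^1\big(F,\cI_{E\vert F}(c_1)\big)\le h^0\big(F,\cO_F\big)=1$, and it does reach $\deg(E)=1$.

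The genuine gap is in the final determination of $c_1$ when $D\ne0$. Your assertion that $L:=c_1-D$ satisfies $L^2=-4$ follows from Riemann--Roch only if, besides $h^0\big(F,\cO_F(L)\big)=h^1\big(F,\cO_F(L)\big)=0$, one also has $h^2\big(F,\cO_F(L)\big)=h^0\big(F,\cO_F(D-c_1)\big)=0$; you never show that $D-c_1$ is non-effective, and it can fail. For instance $(c_1,D)=(0,A)$ gives $L=-A$ with $L^2=+4$, and this candidate passes every cohomological test you impose ($L$ non-effective, $h^1\big(F,\cO_F(L+th)\big)=0$ for $\vert t\vert\le2$), so it silently survives your enumeration; it must instead be killed by the splitting criterion $\ext^1_F\big(\cO_F(L),\cO_F(D)\big)=H^1\big(F,\cO_F(D-L)\big)=0$ (here $H^1\big(F,\cO_F(2A)\big)=0$), which is exactly how the paper disposes of the pairs $(0,A)$ and $(0,3h-A)$ in its explicit table. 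A second unjustified step is ``requiring $c_1$ effective'': nothing in your argument forces $c_1$ to be effective when $D\ne0$, so the candidates $c_1\in\{2A-h,\,2A-2h,\,5h-2A,\,4h-2A\}$ produced by your own list of $L$'s are not actually excluded. Both defects are repairable by applying the splitting test uniformly --- when $D-c_1$ is effective, $2D-c_1$ is a sum of effective divisors with positive square, so $h^1\big(F,\cO_F(2D-c_1)\big)=0$ by Proposition \ref{SD}, and for the four non-effective $c_1$ above one gets $2D-c_1\in\{h,2h\}$ --- but as written your proof leaves these cases open, and they are precisely the content of the paper's case-by-case exclusion list.
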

\begin{proof}
Let $c_1-D$ be effective. It is obviously true that $c_1$ is effective, thus case (ii) above cannot occur.

If $c_1=2h$ and $D\in \vert A\vert \cup\vert 3h-A\vert$, then $c_1-D$ is not effective, because $(2h-A)^2=(A-h)^2=-4$, (see Lemma \ref{lD^2ge4}). It turns out that case (iii) above cannot occur as well.

In order to complete the proof we can thus assume that $c_1-D$ is not effective proving that case either (ii), or (iii) must hold. The cohomology of Sequence \eqref{seqStandard} for $X:=E$ twisted by $\cO_F(c_1-D)$ implies
$$
h^1\big(F,\cI_{E\vert F}(c_1-D)\big){\ge} \deg(E).
$$
The cohomology of Sequence \eqref{seqIdeal} implies
$$
h^1\big(F,\cI_{E\vert F}(c_1-D)\big)\le h^2\big(F,\cO_F(D)\big)=h^0\big(F,\cO_F(-D)\big).
$$

If $D=0$, then $c_1$ is not effective. We have $E\ne\emptyset$ due to Lemma \ref{lNonEmpty}. Moreover 
$$
1\le \deg(E)\le h^1\big(F,\cI_{E\vert F}(c_1)\big)\le h^0\big(F,\cO_F\big)=1,
$$
thus equality must hold. It follows that $E$ is a point.

If $D\ne0$, then 
$$
\deg(E)\le h^1\big(F,\cI_{E\vert F}(c_1-D)\big)\le h^0\big(F,\cO_F(-D)\big)=0,
$$
because $D$ is effective and non--zero. Thus $E=\emptyset$ and Sequence \eqref{seqIdeal} becomes
\begin{equation*}
\label{seqEEmpty}
0\longrightarrow \cO_F(D)\longrightarrow\cE\longrightarrow\cO_F(c_1-D)\longrightarrow0.
\end{equation*}
The cohomology of the above sequence gives surjective maps $H^0\big(F,\cO_F(c_1-D+th)\big)\to H^1\big(F,\cO_F(D+th)\big)$, for $t\in\bZ$. It follows that $h^1\big(F,\cO_F(D+th)\big)=0$ for each $t\le 0$. Moreover $D$ is effective, thus $h^1\big(F,\cO_F(D+th)\big)=0$ also for $t\ge1$. We conclude that $D$ is aCM.

We already know that $\cO_F(D)$ is initialized, because the same is true for $\cE$. Due to Proposition \ref{pWa} and Lemma \ref{lD^2ge4} the only initialized aCM line bundles $\cO_F(xh+yA)\not\cong\cO_F$ on $F$ satisfy the equalities $(xh+yA)^2=4$ and $(xh+yA)h=6$, which are equivalent to the system
$$
x^2+3xy+y^2-1=2x+3y-3=0.
$$
The unique solutions of this system correspond exactly to $A$ and $3h-A$.

If $c_1$ is effective, then it is in the List \eqref{listc_1}. Taking into account that $c_1-D$ is not effective, it follows that $D$ is non--zero, whence $E=\emptyset$ from the above discussion. The pair $(c_1,D)$ must be either $(2h,A)$, or $(2h,3h-A)$, or it is in the following list;
\begin{equation*}
\label{listc_1D}
\begin{gathered}
(0,A),\quad (0,3h-A),\quad (A,3h-A),\quad (2A,3h-A),\\
(h+A,3h-A),\quad (2h+A,3h-A),\quad (h,A),\quad (h,3h-A),\\
 (3h-A,A),\quad
(4h-A,A),\quad(5h-A,A),\quad(6h-2A,A).
\end{gathered}
\end{equation*}
In order to complete the proof of the theorem we have to show that all the cases in the above list cannot occur. We will examine the cases one by one.

Let $(c_1,D)=(0,A)$. Sequence \eqref{seqIdeal} becomes
$$
0\longrightarrow \cO_F(A)\longrightarrow \cE\longrightarrow \cO_F(-A)\longrightarrow 0,
$$
which splits, because $h^1\big(F,\cO_F(2A)\big)=0$, contradicting the indecomposability of $\cE$. The same argument can be used to exclude also the case $(0,3h-A)$.

Let $(c_1,D)=(h,A)$. Sequence \eqref{seqIdeal} becomes
$$
0\longrightarrow \cO_F(A)\longrightarrow \cE\longrightarrow \cO_F(h-A)\longrightarrow 0.
$$
Since $(2A-h)^2=-4$, we deduce that $h^0\big(F,\cO_F(2A-h)\big)=h^1\big(F,\cO_F(2A-h)\big)=0$. Equality \eqref{RRGeneral} thus yields $h^1\big(F,\cO_F(2A-h)\big)=0$, hence the above sequence must split, again a contradiction. The same argument also excludes the case $(h,3h-A)$.

Let $(c_1,D)=(A,3h-A)$. Since $(2A-3h)^2=(3h-2A)^2=-20$, we deduce that both $h^0\big(F,\cO_F(2A-3h)\big)=0$, and $h^2\big(F,\cO_F(2A-3h)\big)=h^0\big(F,\cO_F(3h-2A)\big)=0$. It follows from Equality \eqref{RRGeneral} that $h^1\big(F,\cO_F(2A-3h)\big)=8$. Since $h^1\big(F,\cO_F(A)\big)=h^2\big(F,\cO_F(A)\big)=0$, it follows $h^1\big(F,\cE\big)=h^1\big(F,\cO_F(2A-3h)\big)\ne0$ from the cohomology of Sequence \eqref{seqIdeal}. 

The same argument can be also used to exclude the cases $(2A,3h-A)$, $(A+h,3h-A)$, $(3h-A,A)$, $(4h-A,A)$, $(6h-2A,A)$.
In the cases $(A+2h,3h-A)$, $(5h-A,A)$ we must modify slightly the above argument by taking the cohomology of Sequence \eqref{seqIdeal} twisted by $\cO_F(-h)$.
\end{proof}

\begin{remark}
We have $h^1\big(F,\cO_F(2A-2h)\big)=h^1\big(F,\cO_F(4h-2A)\big)=6$, thus there exist indecomposable extensions
\begin{gather*}
0\longrightarrow \cO_F(A)\longrightarrow \cE\longrightarrow \cO_F(2h-A)\longrightarrow 0,\\
0\longrightarrow \cO_F(3h-A)\longrightarrow \cE\longrightarrow \cO_F(A-h)\longrightarrow 0.
\end{gather*}
Both $\cO_F(A)$ and $\cO_F(-A)$ are aCM, thus the same is true for the above bundles $\cE$ and it is easy to check that such bundles $\cE$ are initialized as well. As pointed out in \cite{C--N}, Theorem 4.1 and Remark 4.2, we know that such bundles are indecomposable and that each non--zero section of the above bundles vanishes on a divisor in $\vert A\vert$ and $\vert 3h-A\vert$ respectively.
\end{remark}

\section{Proof of the main Theorem}
\label{sMain}
In this section we will prove the main theorem stated in the introduction. We will first give the proof when $c_1$ is not effective, then we will examine the case of effective $c_1$. 

\subsection{The case of non--effective $c_1$.}
If $s\in H^0\big(F,\cE\big)$ and $(s)_0=E\cup D$, thanks to the previous Theorem \ref{tc_1Classification}, we know that $D=0$ and $E$ is a point.

\begin{lemma}
\label{lc_1+h}
Let $\cE$ be an initialized indecomposable aCM bundle of rank $2$ on a general determinantal quartic surface $F\subseteq\p3$.

If $c_1$ is not effective, then $c_1+h$ is effective.
\end{lemma}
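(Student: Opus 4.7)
The strategy is to extract structural information from Theorem \ref{tc_1Classification} and then run a very short cohomological argument that exploits the aCM hypothesis together with the fact that the section zero-locus is a single point.

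First I would unpack Theorem \ref{tc_1Classification} under the hypothesis that $c_1$ is not effective. Case (i) is excluded because if $c_1-D$ were effective for some effective $D$, then $c_1=(c_1-D)+D$ would be a sum of effective classes, contradicting the hypothesis; case (iii) is excluded because it forces $c_1=2h$, which is effective. Hence we are in case (ii): for every non-zero $s\in H^0(F,\cE)$ the divisorial part vanishes and $E:=(s)_0$ is a single reduced point. Sequence \eqref{seqIdeal} therefore specializes to
$$0\longrightarrow \cO_F\longrightarrow \cE\longrightarrow \cI_{E\vert F}(c_1)\longrightarrow 0.$$

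Next I would twist this sequence by $\cO_F(h)$ and examine the long exact sequence in cohomology. Since $\cE$ is aCM, $h^1\big(F,\cE(h)\big)=0$; since $F$ is a $K3$ surface, Serre duality gives $h^2\big(F,\cO_F(h)\big)=h^0\big(F,\cO_F(-h)\big)=0$. The relevant fragment then reduces to $0\to h^1\big(F,\cI_{E\vert F}(c_1+h)\big)\to 0$, so
$$h^1\big(F,\cI_{E\vert F}(c_1+h)\big)=0.$$
Finally I would plug this into the ideal sequence of the point $E$ twisted by $c_1+h$:
$$0\longrightarrow \cI_{E\vert F}(c_1+h)\longrightarrow \cO_F(c_1+h)\longrightarrow \cO_E(c_1+h)\longrightarrow 0.$$
The above vanishing makes the restriction map $H^0\big(F,\cO_F(c_1+h)\big)\to H^0\big(E,\cO_E(c_1+h)\big)$ surjective; since $E$ is a single reduced point, the target is one-dimensional, so $h^0\big(F,\cO_F(c_1+h)\big)\ge 1$, i.e., $c_1+h$ is effective.

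There is essentially no obstacle once Theorem \ref{tc_1Classification} is in hand. What is instructive is to check why the argument cannot be run with $c_1$ in place of $c_1+h$: doing so replaces $\cO_F(h)$ by $\cO_F$, and $h^2\big(F,\cO_F\big)=k\ne0$, so the connecting homomorphism $H^1\big(F,\cI_{E\vert F}(c_1)\big)\to H^2\big(F,\cO_F\big)$ is precisely the one-dimensional space needed to accommodate the contribution of the single point $E$. The gain provided by one extra twist by $h$ kills this obstruction and yields the desired effectivity.
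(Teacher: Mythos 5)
Your proof is correct and is essentially the paper's argument in Serre-dual form: the paper twists the dual of Sequence \eqref{seqIdeal} by $\cO_F(-h)$ and uses $h^1\big(F,\cE^\vee(-h)\big)=0$ to get an injection $H^1\big(F,\cI_{E\vert F}(-h)\big)\hookrightarrow H^2\big(F,\cO_F(-c_1-h)\big)\cong H^0\big(F,\cO_F(c_1+h)\big)^\vee$, while you twist the original sequence by $\cO_F(h)$ and use $h^1\big(F,\cE(h)\big)=0$ to get the surjection $H^0\big(F,\cO_F(c_1+h)\big)\twoheadrightarrow H^0\big(E,\cO_E\big)$; both hinge on $\deg(E)=1$ from Theorem \ref{tc_1Classification}. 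No gaps.
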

\begin{proof}
If $c_1$ is not effective, then $D=0$. Twisting Sequence \eqref{seqIdeal} by $\cO_F(-c_1+th)$, we obtain
\begin{equation}
\label{seqIdealTwisted}
0\longrightarrow \cO_F(D-c_1+th)\longrightarrow \cE^\vee(th)\longrightarrow \cI_{E\vert F}(th-D)\longrightarrow0.
\end{equation}

The cohomology of Sequence \eqref{seqIdealTwisted} with $t=-1$ yields 
$$
h^0\big(F,\cO_F(c_1+h)\big)=h^2\big(F,\cO_F(-c_1-h)\big)\ge h^1\big(F,\cI_{E\vert F}(-h)\big).
$$
We also know that $\deg(E)=1$, hence the cohomology of Sequence \eqref{seqStandard} twisted by $\cO_F(-h)$ yields $h^1\big(F,\cI_{E\vert F}(-h)\big)=1$.
\end{proof}

Assume that $c_1=xh+yA$. It follows from Lemmas \ref{lGG} and \ref{lc_1+h} that either $c_1=-h$, or the inequalities
\begin{gather*}
y^2+3(x+1)y+(x+1)^2\ge1,\qquad 2(x+1)+3y\ge1,\\
y^2-3(6-x)y+(6-x)^2\ge1,\qquad 2(6-x)-3y\ge1,
\end{gather*}
must hold. We obtain a picture similar to Figure 1, with the bottom branch of hyperbola, shifted by $-1$ in the vertical direction. The admissible values of $c_1$ correspond either to $(-1,0)$, or to the integral points $(x,y)$ in the bounded region such that either $y^2+3xy+x^2\le0$ or $2x+3y\le0$, because $c_1$ is not effective. Thus $c_1$ is in the following list:
\begin{equation}
\label{listc_1+h}
-h,\quad 3A-2h,\quad A-h,\quad 2A-h,\quad 2h-A,\quad 5h-2A,\quad 7h-3A.
\end{equation}

\begin{theorem}
\label{tc_1NonEffective}
Let $\cE$ be an initialized indecomposable aCM bundle of rank $2$ on a general determinantal quartic surface $F\subseteq\p3$.

If $c_1$ is not effective, then $c_1\in\{\ -h,A-h,2h-A\ \}$. The zero locus $E$ of a general $s\in H^0\big(F,\cE)$ is a point. Moreover all these cases occur on $F$, and $\cE$ is never Ulrich.
\end{theorem}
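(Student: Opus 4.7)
The structural pieces are already in place: by Theorem \ref{tc_1Classification}, the assumption that $c_1$ is not effective forces $D=0$ and $E$ to be a single point, whence $c_2=1$, and the discussion preceding the statement restricts $c_1$ to the seven values listed in \eqref{listc_1+h}. What remains is to eliminate the four values $3A-2h$, $7h-3A$, $2A-h$ and $5h-2A$, to invoke the examples of Section \ref{sExample} for the three survivors, and to rule out the Ulrich property.

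The plan is to combine Riemann--Roch \eqref{RRGeneral} with the aCM hypothesis, which forces
$$
\chi\big(\cE(th)\big)=h^0\big(F,\cE(th)\big)+h^2\big(F,\cE(th)\big)\ge 0 \qquad\text{for every } t\in\bZ.
$$
For $c_1\in\{3A-2h,\,7h-3A\}$ one computes $c_1^2=-20$, hence $\chi(\cE)=4-10-1=-7<0$, contradicting aCM at $t=0$. For $c_1\in\{2A-h,\,5h-2A\}$ one has $c_1h=8$ and $(c_1-2h)^2=-20$; the standard twist formulas $c_1(\cE(-h))=c_1-2h$ and $c_2(\cE(-h))=c_2-c_1h+h^2=-3$ then give $\chi(\cE(-h))=-3<0$, contradicting aCM at $t=-1$. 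Observe that the pairs are exchanged by the involution $A\leftrightarrow 3h-A$, which preserves all the intersection numbers appearing above, so a single calculation handles both members of each pair.

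Existence for the three surviving values is immediate from Section \ref{sExample}: a single point is trivially Cayley--Bacharach, so Example \ref{c_1=0,-1} with $m=1$ provides $c_1=-h$, while Example \ref{c_1=A-h} simultaneously produces $c_1=A-h$ and, by the substitution $A\leftrightarrow 3h-A$, also $c_1=2h-A$.

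To exclude the Ulrich property I will use the Serre correspondence sequence \eqref{seqIdeal} with $D=0$, whose long exact sequence in cohomology gives
$$
h^0\big(F,\cE\big)\le h^0\big(F,\cO_F\big)+h^0\big(F,\cI_{E\vert F}(c_1)\big)\le 1+h^0\big(F,\cO_F(c_1)\big)=1
$$
because $c_1$ is not effective. Since Ulrich rank two bundles on $F$ satisfy $h^0\big(F,\cE\big)=4\,\rk(\cE)=8$, the bundle $\cE$ cannot be Ulrich. The only non--routine step is the Riemann--Roch elimination above; the observation that makes it clean is that each of the four bad values has either $c_1^2=-20$ or $(c_1-2h)^2=-20$, making the obstruction visible after at most one twist.
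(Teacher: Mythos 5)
Your argument is correct, and all the numerical checks go through ($c_1^2=-20$ for $3A-2h$ and $7h-3A$; $c_1h=8$ and $(c_1-2h)^2=-20$ for $2A-h$ and $5h-2A$, giving $\chi(\cE)=-7$ and $\chi(\cE(-h))=-3$ respectively). Where you genuinely diverge from the paper is in the exclusion step: the paper never computes $\chi$ of the rank--two bundle itself, but instead runs Riemann--Roch on the relevant \emph{line} bundles ($h^1(\cO_F(3h-2A))=8$, $h^1(\cO_F(2h-2A))=6$) and then pushes these through the cohomology of Sequence \eqref{seqIdeal} and \eqref{seqStandard} to get lower bounds $h^1(\cE)\ge 8$, resp.\ $h^1(F,\cE(h))\ge 3$, contradicting aCM. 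Your observation that the aCM hypothesis forces $\chi(\cE(th))=h^0+h^2\ge 0$ for every $t$, combined with the fact that each bad value of $c_1$ has $\chi(\cE)<0$ or $\chi(\cE(-h))<0$, collapses this to a one--line lattice computation per pair and avoids the ideal--sheaf bookkeeping entirely; the two are of course consistent, since $\chi(\cE)=-7$ together with $h^0(\cE)\ge1$ recovers the paper's $h^1(\cE)\ge8$. The explicit use of the involution $A\leftrightarrow 3h-A$ to halve the case analysis is also a tidier packaging of what the paper does by saying ``the same argument excludes\dots''. For the Ulrich exclusion you bound $h^0(\cE)\le 1$ directly from Sequence \eqref{seqIdeal} and non--effectivity of $c_1$, whereas the paper argues that a non--effective determinant prevents global generation and hence Ulrichness via Remark \ref{rUlrich}; both are sound, and yours has the small bonus of pinning down $h^0(\cE)=1$ exactly.
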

\begin{proof}
We know that the above three cases occur thanks to Examples \ref{c_1=0,-1} and \ref{c_1=A-h}.

Looking at List \eqref{listc_1+h} it remains to exclude the cases $3A-2h$, $2A-h$, $5h-2A$, $7h-3A$. Recall that $D=0$ and $\deg(E)=1$.

We have $(3h-2A)^2=-20$, thus $h^0\big(F,\cO_F(3h-2A)\big)=h^1\big(F,\cO_F(3h-2A)\big)=0$, hence $h^1\big(F,\cO_F(3h-2A)\big)=8$ (see Equality \eqref{RRGeneral}). Thus we obtain $h^1\big(F,\cI_{E\vert F}(3h-2A)\big)=9$ from the cohomology of Sequence \eqref{seqStandard} twisted by $\cO_F(3h-2A)$. The cohomology of Sequence \eqref{seqIdeal} gives the exact sequence
$$
0\longrightarrow H^1\big(F,\cE\big)\longrightarrow H^1\big(F,\cI_{E\vert F}(3h-2A)\big)\longrightarrow H^2\big(F,\cO_F\big),
$$
hence $h^1\big(F,\cE)\ge8$, a contradiction. One can argue similarly when $c_1=7h-3A$.

Let $c_1=2A-h$. We trivially have $h^0\big(F,\cI_{E\vert F}(h)\big)=3$ and $h^1\big(F,\cI_{E\vert F}(h)\big)=0$, hence the cohomology of Sequence \eqref{seqIdeal} twisted by $\cO_F(2h-2A)$ gives the exact sequence 
$$
H^0\big(F,\cI_{E\vert F}(h)\big)\longrightarrow H^1\big(F,\cI_{E\vert F}(2h-2A)\big)\longrightarrow H^1\big(F,\cE(h)\big)\longrightarrow 0.
$$
Since $(2h-2A)^2=-16$, as usual we deduce that $h^1\big(F,\cI_{E\vert F}(2h-2A)\big)\ge 6$, hence $h^1\big(F,\cE(h)\big)\ge3$ necessarily, a contradiction. The same argument excludes the case $c_1=5h-2A$.

Finally, notice that if $c_1$ is not effective, then $\cE$ is not globally generated, thus it cannot be Ulrich (see Remark \ref{rUlrich}).
\end{proof}

\begin{remark}
\label{rc_1NonEffective}
Let $\cE$ be as in the statement above. We know that it fits into Sequence \eqref{seqIdeal} with $D=0$. Since
$$
h^1\big(F,\cO_F(h)\big)=h^1\big(F,\cO_F(h-A)\big)=h^1\big(F,\cO_F(A-2h)\big)=0
$$
it follows that such a bundle is necessarily unique once the point is fixed. Thus $\cE$ actually coincides with the bundles constructed in Examples \ref{c_1=0,-1} and \ref{c_1=A-h}.
\end{remark}

\subsection{The case of effective $c_1$.}
If $s\in H^0\big(F,\cE\big)$ and $(s)_0=E\cup D$, thanks to the previous Theorem \ref{tc_1Classification}, we know that $c_1-D$ is effective, unless $c_1=2h$ and $D\ne0$.

Lemma \ref{lGenerator} and Remark \ref{rUlrich} imply that $1\le h^0\big(F,\cE\big)\le8$, because $\cE$ is initialized. Moreover equality holds if and only if $\cE$ is Ulrich. If this is the case $D=0$ for a general choice of $s$ (see again Remark \ref{rUlrich}). 

Assume $D\ne0$. We have $1\le h^0\big(F,\cE\big)\le7$, then the cohomology of Sequence \eqref{seqIdeal} and the effectiveness of $D$ give $h^0\big(F,\cO_F(D)\big)\le7$ and $h^2\big(F,\cO_F(D)\big)=0$. 

Proposition \ref{SD} and Lemma \ref{lD^2ge4} also give $h^1\big(F,\cO_F(D)\big)=0$. It follows from Equality \eqref{RRGeneral} that $D^2\le10$. We deduce that $D^2=4$ by Lemma \ref{lD^2ge4} and Remark \ref{rIntersection}. 

If $D\in\vert xh+yA\vert$, then $x^2+3xy+y^2=1$. Moreover $2x+3y\ge1$ (see Remark \ref{rEffective}), hence 
$$
y=\frac{-3x+\sqrt{5x^2+4}}2.
$$
Taking into account List \eqref{listc_1}, an analysis case by case of the inequality $(c_1-D)h\ge2$ shows that the possible cases with effective $c_1-D$ are exactly the ones listed in the following table.

\begin{gather*}
\text{Table A}\\
\begin{array}{|c|c|c|c|}           \hline
  { c_1 } & {D } & D-c_1+h & \text{ Is $D-c_1+h$ effective? y/n. } \\ \hline
6h & 0,\ A,\ 3h-A & -5h,\ A-5h,\ -2h-A & n,\ n,\ n  \\  \hline
6h-A & 0,\ A,\ 3h-A & A-5h,\ 2A-5h,\ -2h & n,\ n,\ n  \\  \hline
6h-2A & 0,\ 3h-A & 3h-6A,\ A-2h& n,\ n  \\  \hline
5h & 0,\ A,\ 3h-A & -4h,\ A-4h,\ -h-A & n,\ n,\ n  \\  \hline
5h-A & 0,\ 3h-A & A-4h,\ -h & n,\ n  \\  \hline
4h & 0,\ A,\ 3h-A & -3h,\ A-3h,\ -A & n,\ n,\ n  \\  \hline
4h-A & 0,\ 3h-A & A-3h,\ 0 & n,\ y  \\  \hline
3h+A & 0,\ A,\ 3h-A & -2h-A,\ -2h,\ h-2A & n,\ n,\ n  \\  \hline
3h & 0,\ A,\ 3h-A & -2h,\ A-2h,\ h-A & n,\ n,\ n  \\  \hline
3h-A & 0,\ 3h-A & A-2h,\  h & n,\ y  \\  \hline
2h+A & 0,\ A & -h-A,\ -h & n,\ n  \\  \hline
2h & 0 & -h & n  \\  \hline
h+A & 0,\ A & -A,\ 0 & n,\ y \\  \hline
h & 0 & 0 & y  \\  \hline
2A & 0,\ A & h-2A,\ h-A & n,\ n  \\  \hline
A & 0,\ A & h-A,\ h & n,\ y  \\  \hline
0 & 0 & 0 & y  \\  \hline
\end{array}
\end{gather*}
The importance of the two last  columns will be clear in a while.

We know from Equality \eqref{RRGeneral} that
\begin{equation}
\label{chi}
h^0\big(F,\cE\big)+h^2\big(F,\cE\big)=4+\frac{c_1^2}2-c_2,\qquad h^2\big(F,\cE(-h)\big)=8+\frac{c_1^2}2-c_1h-c_2.
\end{equation}

Looking at Table A one deduces from Sequence \eqref{seqIdealTwisted} with $t=0$ that $h^2\big(F,\cE\big)=0$ if $(c_1,D)$ is neither $(3h-A,3h-A)$, nor $(A,A)$, nor $(0,0)$.

Looking at Table A one also deduces from Sequence \eqref{seqIdealTwisted} with $t=1$ that if $D\ne0$, then $h^2\big(F,\cE(-h)\big)=h^0\big(F,\cE^\vee(h)\big)=0$ in all the listed cases but $(c_1,D)$ in the following short list:
\begin{equation}
\label{listD-c_1+h}
(4h-A,3h-A),\quad
(3h-A,3h-A),\quad
(h+A,A),\quad
(A,A),\quad (0,0).
\end{equation}

\begin{proposition}
\label{pD=0}
Let $\cE$ be an initialized indecomposable aCM bundle of rank $2$ on a general determinantal quartic surface $F\subseteq\p3$.

Then exactly one of the following cases occurs:
\begin{enumerate}[label=(\roman{*})]
\item the zero--locus of a general section $s\in H^0\big(F,\cE\big)$ has pure dimension $0$;
\item $c_1=2h$ and the zero locus of each non--zero $s\in H^0\big(F,\cE\big)$ is in either $\vert A\vert$, or $\vert 3h-A\vert$.
\end{enumerate} 
\end{proposition}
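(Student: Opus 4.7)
The plan is to combine the classification of $(c_1,D)$ pairs provided by Theorem \ref{tc_1Classification} with the cohomological set-up already established before the proposition.

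First, one disposes of the Ulrich sub-case: by Remark \ref{rUlrich} an Ulrich $\cE$ is globally generated, so a general section has empty divisorial part and a non-empty (by Lemma \ref{lNonEmpty}) $0$-dimensional zero-locus, placing us in case (i). Assume from now on that $\cE$ is not Ulrich, so $h^0\big(F,\cE\big)\le 7$, and that the divisorial part $D$ of $(s)_0$ is non-zero for a general section $s$ (otherwise case (i) already holds). By the discreteness of $\Pic(F)$, the class of $D$ is constant on a dense open subset of $H^0\big(F,\cE\big)$, and the analysis preceding the proposition forces $D^2=4$, hence $D\in\{\ A,\ 3h-A\ \}$.

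Second, Theorem \ref{tc_1Classification} offers only two options compatible with $D\ne 0$: either $c_1-D$ is effective, so $(c_1,D)$ appears in Table A with $D\ne 0$, or $c_1=2h$ with $D\in\vert A\vert\cup\vert 3h-A\vert$. The second alternative is case (ii) of the proposition, once we verify that every non-zero section has divisorial part in $\vert A\vert\cup\vert 3h-A\vert$. This follows from the Remark after Theorem \ref{tc_1Classification}: such an $\cE$ fits into a non-split extension $0\to\cO_F(A)\to\cE\to\cO_F(2h-A)\to 0$ (or with $A$ and $3h-A$ swapped), and since $h^0\big(F,\cO_F(2h-A)\big)=0$ the map $H^0\big(F,\cE\big)\to H^0\big(F,\cO_F(2h-A)\big)$ is identically zero; every non-zero section of $\cE$ therefore lifts to $H^0\big(F,\cO_F(A)\big)$ and vanishes on a divisor in $\vert A\vert$. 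It remains to rule out every Table A entry with $D\ne 0$.

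Third, for every such entry outside the short exceptional list identified in the text, Sequence \eqref{seqIdealTwisted} at $t=0$ and $t=1$ yields $h^2\big(F,\cE\big)=h^2\big(F,\cE(-h)\big)=0$; together with Equality \eqref{chi} this gives $h^0\big(F,\cE\big)=c_1 h-4$. A glance at Table A shows $c_1 h\ge 12$ for each of these entries with $D\ne 0$, contradicting the bound $h^0\big(F,\cE\big)\le 7$. The exceptional pairs $(A,A)$, $(3h-A,3h-A)$, $(h+A,A)$ and $(4h-A,3h-A)$ must be handled by hand. If $E=\emptyset$, Sequence \eqref{seqIdeal} is an extension of $\cO_F(c_1-D)$ by $\cO_F(D)$ whose class lies in $H^1\big(F,\cO_F(2D-c_1)\big)$, and a direct computation shows this cohomology vanishes in each of the four cases (either because $2D-c_1$ coincides with the aCM class $A$ or $3h-A$, or because it has self-intersection $-4$), so the extension splits and contradicts the indecomposability of $\cE$. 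If $E\ne\emptyset$, twisting Sequence \eqref{seqIdeal} by $\cO_F(-2h)$ yields
$$0\to\cO_F(D-2h)\to\cE(-2h)\to\cI_{E\vert F}(c_1-D-2h)\to 0;$$
in each exceptional case $D-2h$ has self-intersection $-4$, so $h^1\big(F,\cO_F(D-2h)\big)=h^2\big(F,\cO_F(D-2h)\big)=0$, while the non-effectiveness of $c_1-D-2h$ combined with $E\ne\emptyset$ forces $h^1\big(F,\cI_{E\vert F}(c_1-D-2h)\big)\ge 1$. Hence $h^1\big(F,\cE(-2h)\big)\ne 0$, contradicting the aCM property. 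The main obstacle will be verifying that this $t=-2$ twist uniformly produces the required non-vanishing across all four exceptional pairs, since the direct Euler-characteristic argument breaks down precisely when one of the vanishings $h^2\big(F,\cE\big)=0$ or $h^2\big(F,\cE(-h)\big)=0$ collapses.
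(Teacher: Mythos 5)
Your proof is correct and, for the bulk of the argument, follows the same skeleton as the paper: the reduction to Table A with $D\in\{\,A,3h-A\,\}$, the identity $h^0\big(F,\cE\big)=c_1h-4$ coming from the vanishings $h^2\big(F,\cE\big)=h^2\big(F,\cE(-h)\big)=0$ together with Equalities \eqref{chi}, and the exclusion of every pair outside List \eqref{listD-c_1+h} because $c_1h\ge12$ forces $h^0\big(F,\cE\big)\ge8$. Where you genuinely depart from the paper is in the four exceptional pairs. The paper excludes $(A,A)$ and $(3h-A,3h-A)$ by computing $c_2$ from each of the two Equalities \eqref{chi} and obtaining incompatible values, and excludes $(A+h,A)$ and $(4h-A,3h-A)$ by deriving $c_2=7$ and then asserting $h^1\big(F,\cI_{E\vert F}(h)\big)\ge3$, whence $h^1\big(F,\cE\big)\ne0$. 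Your uniform alternative --- splitting of the extension when $E=\emptyset$ because $h^1\big(F,\cO_F(2D-c_1)\big)=0$ in each case, and $h^1\big(F,\cE(-2h)\big)\ge\deg(E)\ge1$ when $E\ne\emptyset$ because $c_1-D-2h\in\{-h,-2h\}$ while $(D-2h)^2=-4$ --- checks out in all four cases and is in fact more robust: in the paper's case $(A+h,A)$ one has $\deg(E)=c_2-D(c_1-D)=7-6=1$, so $h^1\big(F,\cI_{E\vert F}(h)\big)=0$ and the asserted bound $\ge3$ (which would require $\deg(E)\ge7$, i.e.\ conflating $\deg(E)$ with $c_2$) does not hold as stated; the obstruction to the aCM property there really appears only at the twist by $\cO_F(-2h)$, exactly where you look. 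You also spell out, via $H^0\big(F,\cO_F(A)\big)\cong H^0\big(F,\cE\big)$, the fact that in case (ii) every (not merely the general) non-zero section is divisorial, a point the paper delegates to the remark following Theorem \ref{tc_1Classification} and to \cite{C--N}.
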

\begin{proof}
Assume $D\ne0$, so that $1\le h^0\big(F,\cE\big)\le7$. Let $(c_1,D)$ be not in List \eqref{listD-c_1+h}. By combining the above informations we obtain the equality $h^0\big(F,\cE\big)=c_1h-4$, hence $5\le c_1h\le 11$, which turns out to be impossible by computing $c_1h$ from Table A.

Thus we have to restrict to $(c_1,D)$ in List \eqref{listD-c_1+h}. In the case $(0,0)$ there is nothing to prove. In the case $(A,A)$ we deduce $h^0\big(F,\cE\big)=4$, $h^2\big(F,\cE\big)=h^0\big(F,\cE^\vee\big)=1$, $h^2\big(F,\cE(-h)\big)=h^0\big(F,\cE^\vee(h)\big)=4$  Sequences \eqref{seqIdeal} and \eqref{seqIdealTwisted}. The first Equality \eqref{chi} yields $c_2=1$, the second $c_2=0$, a contradiction. The case $(3h-A,3h-A)$ can be excluded similarly.

In the case $(A+h,A)$ we deduce $h^2\big(F,\cE(-h)\big)=h^0\big(F,\cE^\vee(h)\big)=1$ (from the cohomology of Sequence \eqref{seqIdealTwisted} with $t=1$). The second Equality \eqref{chi} gives $c_2=7$. The cohomology of Sequence \eqref{seqStandard} twisted by $\cO_F(h)$ gives $h^1\big(F,\cI_{E\vert F}(h)\big)\ge3$. Thus the cohomology of Sequence \eqref{seqIdeal} implies $h^1\big(F,\cE\big)\ge3$ because $\cO_F(A)$ is aCM. The case $(4h-A,3h-A)$ can be excluded similarly.
\end{proof}

We are now ready to give the complete classification of the bundles we are interested in when $c_1$ is effective. 

For reader's benefit we recall that a closed subscheme $X\subseteq\p3$ is {\sl aG} (i.e. arithmetically Gorenstein) if it is aCM and $\omega_X\cong \cO_X\otimes\cO_{\p3}(\alpha)$ for a suitable $\alpha\in\bZ$.

Closed subschemes in $\p3$ of degree at most $2$ and dimension $0$ are always aG.
This is no longer true when the degree is $d\ge3$.  Anyhow, if the Hilbert function of the homogeneous coordinate ring $S_E$ of a $0$-dimensional, aG, closed subscheme $E\subseteq\p3$ is $(1,n_1,\dots,n_{\sigma-1},d,\dots)$ with $n_{\sigma-1}<d$, then $E$ is CB with respect to $\cO_{\p3}(\sigma)$ (see \cite{Kr}, Theorem 1.1). 

\begin{theorem}
\label{tc_1Effective}
Let $\cE$ be an initialized indecomposable aCM bundle of rank $2$ on a general determinantal quartic surface $F\subseteq\p3$. 

If $c_1$ is effective, then one of the following holds.
\begin{enumerate}[label=(\roman{*})]
\item $c_1=0$: then $c_2=2$. The zero locus $E$ of a general $s\in H^0\big(F,\cE)$ is a $0$--dimensional aG subscheme contained in exactly one line.
\item $c_1=h$: then $c_2$ is $3$, $4$, or $5$. The zero locus $E$ of a general $s\in H^0\big(F,\cE)$ is a $0$--dimensional aG subscheme of degree $c_2$ of a linear subspace of $\p3$ of dimension $c_2-2$.
\item $c_1\in\{\ A,3h-A\ \}$: then $c_2$  is $3$, or $4$. In this case the zero locus $E$ of a general $s\in H^0\big(F,\cE)$ is $0$--dimensional subscheme which is not aG.
\item $c_1\in\{\ A+h,4A-h\ \}$: then $c_2=8$. In this case the zero locus $E$ of a general $s\in H^0\big(F,\cE)$ is $0$--dimensional subscheme which is not aG and which is contained in exactly one pencil of quadrics.
\item $c_1=2h$: then $c_2=8$. In this case one of the two following cases occur:
\begin{enumerate}[label=(\alph{*})]
\item the zero locus $E$ of a general $s\in H^0\big(F,\cE)$ is a $0$--dimensional aG subscheme which is the base locus of a net of quadrics;
\item the zero locus $E$ of a general $s\in H^0\big(F,\cE)$ is a $0$--dimensional aG subscheme lying on exactly one twisted cubic;
\item the zero locus $E$ of each non--zero $s\in H^0\big(F,\cE)$ is a divisor in either $\vert A\vert$, or $\vert 3h-A\vert$.
\end{enumerate}
\item $c_1=3h$: then $c_2=14$. In this case the zero locus $E$ of a general $s\in H^0\big(F,\cE)$ is a $0$--dimensional aG subscheme which is CB with respect to $\cO_F(3h)$.
\end{enumerate}
Moreover, all the above cases actually occur on $F$, and $\cE$ is Ulrich if and only if $c_1=3h$.
\end{theorem}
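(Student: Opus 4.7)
The plan is to combine Proposition \ref{pD=0} with an exhaustive case analysis on $c_1$ running through List \eqref{listc_1}. First, if the zero locus of a general section has a nontrivial divisorial part $D\neq 0$, Proposition \ref{pD=0} forces $c_1=2h$ and $D\in |A|\cup|3h-A|$; the two indecomposable extensions produced in the Remark following Theorem \ref{tc_1Classification} already give case (v)(c), and one computes $c_2=A(2h-A)=8$. In every other situation one may assume $(s)_0=E$ is $0$-dimensional, Sequence \eqref{seqIdeal} reduces to
$$0\to\cO_F\to\cE\to\cI_{E|F}(c_1)\to 0,$$
and $E$ is locally complete intersection of degree $c_2$, Cayley--Bacharach with respect to $\cO_F(c_1)$ by Theorem \ref{tCB}.

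For each effective $c_1$ in List \eqref{listc_1} I would feed the standard sequence \eqref{seqStandard} twisted by $\cO_F(c_1+th)$ into the cohomology of the above short exact sequence. The aCM condition $h^1(F,\cE(th))=0$ becomes, for each $t\in\bZ$, a relation between $h^1(F,\cI_{E|F}(c_1+th))$ and $h^2(F,\cO_F(th))=h^0(F,\cO_F(-th))$, from which Lemma \ref{laCM} in turn implies projective normality of $E\subseteq\p3$. Together with Lemma \ref{lGenerator} giving $h^0(F,\cE)\le 8$, the identity $\chi(\cE)=4+c_1^2/2-c_2$ from \eqref{RRGeneral}, and the initializedness of $\cE$ (controlled through $\cE^\vee(h)$ via Serre duality), this produces a small finite set of numerically admissible pairs $(c_1,c_2)$ together with the Hilbert function of $E$, from which the geometric descriptions (contained in a line, in a linear subspace, in a pencil of quadrics, in a net of quadrics, on a twisted cubic) in (i)--(vi) can be read off.

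The most delicate step, which I expect to be the main obstacle, is ruling out the nine \lq large\rq\ values $c_1\in\{\,2A,\,2h+A,\,3h+A,\,4h,\,5h-A,\,5h,\,6h-2A,\,6h-A,\,6h\,\}$ that appear in List \eqref{listc_1} but not in the statement. For these, $c_1 h$ is large, so $h^0(\cO_F(c_1))$ is large, and the required degree $c_2=\deg(E)$ matching $h^0(F,\cE)\le 8$ is very constrained. The strategy will be to compute $c_2$ from the two Riemann--Roch equalities \eqref{chi}, compute the expected Hilbert function of $E$ from aCMness, and derive a contradiction either by exhibiting a forbidden non-vanishing (e.g.\ $h^1(F,\cE(th))\neq 0$ for some $t$) or by forcing $\cE$ to split as a sum of two line bundles via Proposition \ref{pWa}. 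The asymmetry in treatment between, for instance, $4h-A$ (which survives) and $4h$ (which does not) ultimately reflects the availability of an initialized aCM line sub-bundle in one case but not the other.

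Existence in each surviving case is already essentially present in Section \ref{sExample}: case (i) comes from Example \ref{c_1=0,-1}, case (ii) from Example \ref{c_1=1}, case (iii) with $c_2=3$ from Example \ref{c_1=A-h} and with $c_2=4$ from Example \ref{c_1=A}, case (iv) from $\cE^\vee(2h)$ in Example \ref{c_1=A}, and case (v)(c) from the Remark after Theorem \ref{tc_1Classification}; cases (v)(a), (v)(b), (vi) are realized via fresh Hartshorne--Serre constructions from sets of $8$, $8$, $14$ points respectively (the last due to \cite{C--K--M1}), with Cayley--Bacharach checked by exhibiting the appropriate Hilbert function and applying Theorem \ref{tCB}. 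Finally, Ulrichness is equivalent to $h^0(F,\cE)=8$; matching the Hilbert polynomial of the linear resolution of Remark \ref{rUlrich} against Riemann--Roch forces $c_1 h=12$ and $c_1^2=2c_2+8$, so among the admissible pairs only $(c_1,c_2)=(3h,14)$ is compatible, and $\cE$ is Ulrich precisely in case (vi).
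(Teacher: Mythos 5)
Your skeleton matches the paper's: reduce to $D=0$ via Proposition \ref{pD=0} (with $c_1=2h$, $D\in\vert A\vert\cup\vert 3h-A\vert$ peeled off as case (v)(c)), run Hartshorne--Serre, and constrain $(c_1,c_2)$ by Riemann--Roch together with $h^0\big(F,\cE\big)\le 8$. Indeed the two equalities \eqref{chi}, plus $h^2\big(F,\cE\big)=0$ and $h^2\big(F,\cE(-h)\big)\ge0$, give $\tfrac{c_1^2}{2}-4\le c_2\le 8+\tfrac{c_1^2}{2}-c_1h$, hence $c_1h\le12$, which kills seven of your nine ``large'' values in one stroke; so that part of your plan does work, although you never actually write down the inequality that does it. The genuine gaps are elsewhere. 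First, for the surviving $c_1$ the numerics do \emph{not} pin down $c_2$: they leave $c_2\in\{1,\dots,5\}$ for $c_1=h$, $c_2\in\{1,2,3,4\}$ for $c_1=A$, and $c_2\in\{6,7,8\}$ for $c_1=h+A$. Eliminating the extra values is where the real work lies and it is not ``read off'' from Hilbert functions alone: one needs that a single point cannot be Cayley--Bacharach with respect to a globally generated line bundle (killing $c_2=1$ for $c_1=A$), that a length-$2$ scheme cannot lie on three independent planes (killing $c_2=1,2$ for $c_1=h$), the Macaulay growth bound applied to the Hilbert function $(1,c_2-4,c_2,\dots)$ to force $c_2=8$ for $c_1=h+A$, and, for $c_1=A$, $c_2=2$, the trick of passing to $\cE^\vee(h)$, which would be an initialized indecomposable aCM bundle with non-effective $c_1=2h-A$ and $c_2=0$, contradicting Theorem \ref{tc_1NonEffective}. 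None of these appear in your proposal.

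Second, your mechanism for the two remaining large values $c_1\in\{2A,6h-2A\}$ is off: Proposition \ref{pWa} classifies aCM line bundles and does not by itself force a splitting. What is actually needed is that $c_2=4$ (forced by the inequalities above), then a computation showing $h^0\big(F,\cI_{E\vert F}(A)\big)=2$, so that $E$ is the complete intersection of two integral curves in $\vert A\vert$; the \emph{uniqueness} clause of Theorem \ref{tCB} (valid since $h^1\big(F,\cO_F(-2A)\big)=0$) then identifies $\cE$ with $\cO_F(A)^{\oplus2}$, contradicting indecomposability. Two smaller points: Lemma \ref{laCM} concerns effective divisors (curves) on $F$, so it does not yield ``projective normality of $E$'' for a $0$-dimensional $E$ --- the correct route is to extract the Hilbert function of $E$ from the aCM condition directly; and for existence in cases (v)(a),(v)(b) your proposed fresh constructions of CB sets of $8$ points are nontrivial and unexecuted, whereas the classification and existence for $c_1=2h$ can simply be imported from \cite{C--N}.
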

\begin{proof}
Recall that the zero--locus $E$ of a general section of $\cE$ has pure codimension $2$, but possibly in the case $c_1=2h$ (see Proposition \ref{pD=0}). With this restriction in mind, we will prove the statement by an analysis case by case with the help of Table A.

We first assume that $h-c_1$ is effective. Looking at "the last column of Table A this is equivalent to $c_1\in\{\ 0,h\ \}$, because $D=0$.

Let $c_1=0$. In this case $\cE\cong\cE^\vee$. Computing the cohomology of Sequence \eqref{seqIdeal} we obtain $h^0\big(F,\cE\big)=h^2\big(F,\cE\big)=1$. The first equality \eqref{chi} thus gives $c_2=2$. 

As already pointed out, $E$ is necessarily aG with Hilbert function $(1,1,\dots,1,2,\dots)$. 
The homogeneous coordinate ring $S_E$ is generated in degree $1$, thus there is only a single $1$ in the sequence: in particular $E$ is contained in exactly two linearly independent planes, hence in a single line.
This case actually occurs as proved in Example \ref{c_1=0,-1}.

Let $c_1=h$. In this case the cohomology of Sequence \eqref{seqIdeal}  implies 
$$
h^0\big(F,\cE\big)= h^0\big(F,\cI_{E\vert F}(h)\big)+1\le h^0\big(F,\cO_F(h)\big)+1=5.
$$
Moreover $h^2\big(F,\cE\big)=h^0\big(F,\cE^\vee\big)=h^0\big(F,\cE(-h)\big)=0$. The first equality \eqref{chi} thus gives
$$
c_2=6-h^0\big(F,\cE\big)=5-h^0\big(F,\cI_{E\vert F}(h)\big).
$$
Thus $c_2\in\{\ 1,2,3,4,5\ \}$. If $c_2=1$ (resp. $2$), we would have $h^0\big(F,\cI_{E\vert F}(h)\big)=4$ (resp. $3$). Nevertheless, in this case $E$ is a point which is necessarily contained in at most three planes (resp. a closed subscheme of degree $2$, which is necessarily contained in at most two planes). Thus we obtain a contradiction.

When $c_2\ge3$, then $E$ is contained in exactly $5-c_2$ linearly independent planes, i.e. in a linear subspace of dimension $c_2-2$ of $\p3$. Since $E$ must be CB with respect to $\cO_F(h)$, it follows that its closed subschemes of degree $c_2-1$ are not contained in smaller linear subspaces, thus $E$ is aG (see e.g. \cite{Sch}, Lemma (4.2)). These cases occur as proved in Example \ref{c_1=1}.

From now on we will assume that $h-c_1$, hence $-c_1$, is not effective. Again we know from the last column of Table $A$ which are the possible values of $c_1$. The cohomology of Sequence \eqref{seqIdealTwisted} with $t=0$ implies 
$h^2\big(F,\cE\big)=h^0\big(F,\cE^\vee\big)=0$.

We know that $h^0\big(F,\cE\big)\le 8$. By combining this restriction with the Equalities \eqref{chi} we obtain the folowing chain of inequalities
\begin{equation}
\label{lBound}
\frac{c_1^2}2-4\le c_2=8+\frac{c_1^2}2-c_1h-h^2\big(F,\cE(-h)\big)\le 8+\frac{c_1^2}2-c_1h,
\end{equation}
whence $c_1h\le12$. Looking at List \eqref{listc_1} one can immediately exclude all the cases but $A$, $h+A$, $2A$, $2h$, $3h-A$, $3h$, $4h-A$, $6h-2A$. We will now examine these cases.

Sequence \eqref{seqIdealTwisted} with $t=1$ gives
$$
H:=h^2\big(F,\cE(-h)\big)=h^0\big(F,\cE^\vee(h)\big)\le h^0\big(F,\cI_{E\vert F}(h)\big)\le4.
$$

Let $c_1=2A$. In this case Inequalities \eqref{lBound} force $c_2=4$, hence $H=0$. We will check that $E$ is the intersection of two divisors in $\vert A\vert$, by checking that $h^0\big(F,\cI_{E\vert F}(A)\big)=2$: indeed $A^2=4$ and the elements of $\vert A\vert$ are integral (see Remark \ref{rIntersection}). 
Notice that the cohomology of Sequence \eqref{seqIdeal} twisted by $\cO_F(-A)$ gives $h^0\big(F,\cI_{E\vert F}(A)\big)=h^0\big(F,\cE(-A)\big)$. Since $h^2\big(F,\cE(-A)\big)=h^0\big(F,\cE(-A)\big)$, it follows that $2h^0\big(F,\cE(-A)\big)=4$, thanks to Equality \eqref{RRGeneral}. 

The above discussion proves the existence of an exact sequence of the form
$$
0\longrightarrow\cO_F\longrightarrow\cO_F(A)^{\oplus2}\longrightarrow\cI_{E\vert F}(2A)\longrightarrow0,
$$
besides Sequence \eqref{seqIdeal}. Thanks to Theorem \ref{tCB} we deduce that $\cE\cong\cO_F(A)^{\oplus2}$, because $h^1\big(F,\cO_F(-2A)\big)=0$. The same argument allows us to exclude the case $c_1=6h-2A$ as well.

Let $c_1=A$ (the case $c_1=3h-A$ can be handled similarly), hence $h^0\big(F,\cE\big)=6-c_2\le6$. Inequalities \eqref{lBound} imply $H\in\{\ 0,1,2,3,4\ \}$ and $c_2=4-H$. The case $H=4$ cannot occur, because $E\ne\emptyset$ (see Lemma \ref{lNonEmpty}). The case $H=3$ cannot occur too: indeed, in this case, $E$ should be a point, thus it cannot be CB with respect to $\cO_F(A)$, because this line bundle is globally generated. 

Assume $H=2$. In this case $E$ is a $0$--dimensional closed subscheme of degree $2$. As already pointed out above we have $h^0\big(F,\cI_{E\vert F}(h)\big)=2$. Thus, computing the cohomology of Sequence \eqref{seqIdealTwisted} for $t=0$ and $1$, one checks that $\cE^\vee(h)$ is initialized. Moreover, it is also aCM and indecomposable (by hypothesis) because the same is true for $\cE$. We have $c_1(\cE^\vee(h))=2h-A$ and $c_2(\cE^\vee(h))=0$. We thus deduce from Theorem \ref{tc_1NonEffective} that this case cannot occur.

The same argument when $H=1$ implies that $\cE^\vee(h)$ is initialized indecomposable aCM with $c_1(\cE^\vee(h))=2h-A$ and $c_2(\cE^\vee(h))=1$. As pointed out at the end of Example \ref{c_1=A-h} we can construct a bundle with all the above properties, thus such an $\cE$ exists. Moreover, one easily checks $h^0\big(F,\cI_{E\vert F}(h)\big)=h^0\big(F,\cE^\vee(h)\big)=1$, thus $E$ is not contained in a line, hence $E$ is not aG in this case.

Let $H=0$. In this case $c_1=A$ and $c_2=4$. We can construct a bundle with these properties, as shown in Example \ref{c_1=A}. We have $h^0\big(F,\cI_{E\vert F}(h)\big)=h^0\big(F,\cE^\vee(h)\big)=0$ as well. Thus $E$ is not contained in any plane, hence its Hilbert function is $(1,4,\dots)$. Since every subscheme of $E$ of degree $3$ trivially lies in a plane, it follows that $E$ is not CB with respect to $\cO_{\p3}(1)$, hence it cannot be aG. 

Let $c_1=h+A$ (the case $c_1=4h-A$ can be handled similarly). Inequalities \eqref{lBound} imply $H\in\{\ 0,1,2\ \}$ and $c_2=8-H$. Consider the exact sequence
$$
0\longrightarrow  \cI_{F\vert \p3}(t)\longrightarrow \cI_{E\vert \p3}(t)\longrightarrow\cI_{E\vert F}(th)\longrightarrow 0
$$
We have that $\cO_F(-A)$ is aCM and $\cI_{F\vert \p3}(t)\cong\cO_F(t-4)$. Thus the cohomologies of Sequence \eqref{seqIdealTwisted} and of the above one for $t=0,1,2$, imply that the Hilbert function of $E$ is $(1,c_2-4,c_2,\dots)$. In particular we obtain $(1,4,8,\dots)$, $(1,3,7,\dots)$, $(1,2,6,\dots)$ if $H=0,1,2$ respectively. 

Since $S_E$ is generated in degree $1$, looking at the growth from degree $1$ to $2$, we deduce that the only admissible Hilbert function is the first one, hence $H=0$. It follows that $c_2=8$ and that $E$ is contained in exactly one pencil of quadrics. In particular, $E$ is not CB with respect to $\cO_{\p3}(2)$, thus $E$ cannot be aG. Moreover, $h^0\big(F,\cE\big)=6$. A bundle with these properties can be constructed as shown at the end of Example \ref{c_1=A}.

Let $c_1=2h$. In this case $h^2\big(F,\cE(-h)\big)=h^0\big(F,\cE(-h)\big)=0$, thus Equality \eqref{RRGeneral} for $\chi(\cE(-h))$ yields $c_2=8$, hence $h^0\big(F,\cE\big)=4$. The complete description and the existence of initialized, indecomposable aCM bundles of rank $2$ on $F$ with $c_1=2h$ and $c_2=8$ is in \cite{C--N}, Proposition 2.1 and Theorems 6.2, 7.1,  4.1, where the three family of bundles corresponding to the cases (a), (b), (c) are described in details.

Notice that in all the above cases we showed that $h^0\big(F,\cE\big)\le7$, hence the considered $\cE$ are not Ulrich.

Let $c_1=3h$. In this case $H=0$ and $c_2=14$, thus $h^0\big(F,\cE\big)=8$, thanks to the first Equality \eqref{chi}: hence $\cE$ is Ulrich. We know from Theorem 1.1 of \cite{C--K--M1} the existence of such kind of bundles on each smooth quartic surface.
\end{proof}

\begin{remark}
Let $\cE$ be as in the statement above. 

As in Remark \ref{rc_1NonEffective} one easily checks that the bundles with $c_1\in \{\ 0,h,2h,3h\ \}$ are uniquely determined by $E$, thus they actually coincide with the bundles constructed in Examples \ref{c_1=0,-1} and \ref{c_1=1}, or with the bundles described in \cite{C--N} and \cite{C--K--M1} respectively.

Similarly, when either $c_1\in\{\ A,3h-A\ \}$ and  $c_2=3$, or $c_1\in\{\ A+h,4h-A\ \}$, then $\cE$ is as at the end of Example either \ref{c_1=A-h}, or \ref{c_1=A}.

Let us consider the case $c_1=A$ more in details (analogous arguments hold in the cases $c_1=3h-A$). The first Equality \eqref{chi} implies $h^0\big(F,\cE\big)=6-c_2$, hence $h^0\big(F,\cI_{E\vert F}(A)\big)=5-c_2\ge1$, thus there is a curve in $C\in \vert A\vert$ containing $E$. If $C$ is smooth, the cohomology of a suitable twist of the exact sequence
$$
0\longrightarrow \cI_{C\vert F}\longrightarrow \cI_{E\vert F}\longrightarrow \cI_{E\vert C}\longrightarrow 0,
$$
and the isomorphisms $\cI_{C\vert F}\cong\cO_F(-C)$, $\cI_{E\vert C}\cong\cO_C(-E)$ yield $h^0\big(C,\cO_C(C-E)\big)=1$. Riemann--Roch theorem on $C$ implies $h^0\big(C,\cO_C(E)\big)=2$. Thus $\cO_C(E)$ is a complete $g^1_{c_2}$ on $C$.  In particular the bundle $\cE$ is constructed exactly as in either Example \ref{c_1=A}, if $c_2=4$, or at the end of Example  \ref{c_1=A-h}, if $c_2=3$.

An analogous construction holds when $c_1=A+h$ (or $4h-A$). Indeed, in this case $c_2=8$. As in the previous case one $h^0\big(F,\cI_{E\vert F}(A)\big)\ge1$. If there is a smooth curve $C\in\vert A+h\vert$, then we can argue as above that $\cO_C(E)$ is a complete $g^1_8$ on $C$. 
\end{remark}

\bigskip
\noindent
Gianfranco Casnati,\\
Dipartimento di Scienze Matematiche, Politecnico di Torino,\\
c.so Duca degli Abruzzi 24, 10129 Torino, Italy\\
e-mail: {\tt gianfranco.casnati@polito.it}


\begin{thebibliography}{44}

\bibitem{A--O}
  V. Ancona, G. Ottaviani: \emph{Some applications of Beilinson's theorem to projective spaces and quadrics}. Forum Math. \textbf{3} \rm{(1991)}, 157--176.

\bibitem{Bea}
  A. Beauville: \emph{Determinantal hypersurfaces}.  Michigan Math. J. \textbf{48} \rm(2000), 39--64.

\bibitem{B--E}
D.A. Buchsbaum, D. Eisenbud: \emph{Algebra structures for finite free resolutions, and some structure theorems for ideals of codimension
$3$}. Amer. J. Math. \textbf{99} \rm (1977), 447--485.

\bibitem{C--H1}
M. Casanellas, R. Hartshorne: \emph{ACM bundles on cubic surfaces}. J. Eur. Math. Soc. \textbf{13} \rm (2011), 709--731.

\bibitem{C--H2}
  M. Casanellas, R. Hartshorne, F. Geiss, F.O. Schreyer: \emph{Stable Ulrich bundles}. Int. J. of Math. \text{23} 1250083 \rm(2012).

\bibitem{Cs}
G. Casnati: \emph{On rank two aCM bundles}. Preprint.

\bibitem{C--G}
G. Casnati, F. Galluzzi: \emph{Stability of rank $2$ Ulrich bundles on projective $K3$ surfaces}. Work in progress.

\bibitem{C--N}
G. Casnati, R. Notari: \emph{Examples of rank two aCM bundles on smooth quartic surfaces in $\p3$}. Preprint arXiv:1601.02907 [math.AG]. To appear in Rend. Circ. Mat. Palermo.

\bibitem{C--K--M1}
  E. Coskun, R.S. Kulkarni, Y. Mustopa: \emph{Pfaffian quartic surfaces and representations of Clifford algebras}. Doc. Math.  \textbf{17} \rm (2012), 1003--1028.

\bibitem{C--K--M2}
  E. Coskun, R.S. Kulkarni, Y. Mustopa: \emph{The geometry of Ulrich bundles on del Pezzo surfaces}. J. Algebra \textbf{375} (2013), 280--301.

\bibitem{Do}
I.V. Dolgachev: \emph{Classical algebraic geometry.
A modern view}. Cambridge University Press \rm(2012).

\bibitem{Fa}
D. Faenzi: \emph{Rank $2$ arithmetically Cohen--Macaulay bundles on a nonsingular cubic surface}. J. Algebra \textbf{319} (2008), 143--186.

\bibitem{Ha2}
  R. Hartshorne: \emph{Algebraic geometry}. G.T.M. 52, Springer \rm (1977).

\bibitem{Hu}
D. Huybrechts: \emph {Lectures on K3 surfaces}. To appear: draft notes available at {\tt http://www.math.uni-bonn.de/people/huybrech/K3Global.pdf}\ .

\bibitem{H--L}
D. Huybrechts, M. Lehn: \emph {The geometry of moduli spaces of sheaves. Second edition}. Cambridge Mathematical Library, Cambridge U.P. \rm (2010).

\bibitem{Knu2}
A. Knutsen: \emph{Smooth curves on projective $K3$ surfaces}. Math. Scand. \textbf{90} (2002), 215--231.

\bibitem{Kr}
M. Kreuzer: \emph{On $0$--dimensional complete intersections}.  Math. Ann. \textbf{292} \rm  (1992), 43--58.

\bibitem{Lop}
A. Lopez: \emph{Noether--Lefschetz theory and the Picard group of projective surfaces}.  Memoirs A.M.S. \textbf{89} \rm  (1991).

\bibitem{Ma1}
  C. Madonna: \emph{A splitting criterion for rank $2$ vector bundles on hypersurfaces in $\bP^4$}. Rend. Sem. Mat. Univ. Pol. Torino \textbf{ 56} \rm (1998), 43--54.

\bibitem{Mi}
J.C. Migliore: \emph{Introduction to liaison theory and deficiency modules}. Progress in Mathematics 165, Birkh\"auser (1998).

\bibitem{O--S--S}
  C. Okonek, M. Schneider, H. Spindler: \emph{ Vector bundles on complex projective spaces}. Progress in Mathematics 3, \rm(1980).
 
\bibitem{SD}
B. Saint--Donat: \emph{Projective models of $K$--$3$ surfaces}. Amer. J. Math. \textbf{96} \rm(1974), 602--639.

\bibitem{Sch}
F.O. Schreyer: \emph{Syzygies of canonical curves and special linear series}. Math. Ann. \textbf{275} (1986), 105--137.

\bibitem{Wa}
K. Watanabe: \emph{The classification of ACM line bundles on a quartic hypersurfaces on $\p3$}. Geom. Dedicata
\textbf{175} \rm (2015) 347--353.

\bibitem{Wa2}
K. Watanabe: \emph{On the splitting of Lazarsfeld--Mukai bundles on $K3$ surfaces}. J. Algebra \textbf{447} (2016), 445--454.



\end{thebibliography}
\end{document}